\newcommand{\IZ}{{\mathbb{Z}}}
\newcommand{\fA}{{\mathfrak{A}}}     % for the alternating group
\DeclareMathOperator{\Out}{Out}                  %      Outer Automorphisms
\DeclareMathOperator{\Ext}{Ext}                   % 
\DeclareMathOperator{\Syl}{Syl}                  %       Sylow
\DeclareMathOperator{\Inf}{Inf}                    %       Inflation
\DeclareMathOperator{\soc}{soc}			%  socle
\DeclareMathOperator{\GL}{GL}
\DeclareMathOperator{\PSL}{PSL}
\DeclareMathOperator{\SL}{SL}
\DeclareMathOperator{\PSU}{PSU}
\DeclareMathOperator{\PSp}{PSp}
\let\lra=\longrightarrow
\newtheorem{thm}{Theorem}[section]
\newtheorem{lem}[thm]{Lemma}
\newtheorem{cor}[thm]{Corollary}
\newtheorem{prop}[thm]{Proposition}
\theoremstyle{theorem}
\newtheorem{thma}{Theorem}         %theorem A,B,C,...
\newtheorem{cora}[thma]{Corollary}         %corollary A,B,C,...
\newtheorem{quesa}[thma]{Question}
\theoremstyle{definition}
\newtheorem{hyp}[thm]{Hypothesis}
\theoremstyle{remark}
\newtheorem{rem}[thm]{Remark}
\begin{document}

%--------------------------------------------------------------------------------------------------------
%------------------------------------------ TITLE MATTERS -------------------------------------
%--------------------------------------------------------------------------------------------------------

\title{Simple modules in the Auslander-Reiten quiver of principal blocks with abelian defect groups}

\date{\today}

\author{Shigeo Koshitani}
\address{Department of Mathematics, Graduate School of Science,
Chiba University, 1-33 Yayoi-cho, Inage-ku, Chiba, 263-8522, Japan.}
\email{koshitan@math.s.chiba-u.ac.jp}

\author{Caroline Lassueur}
\address{Caroline Lassueur\\ FB Mathematik, TU Kaiserslautern, Postfach 3049, 67653 Kaisers\-lautern, 
Germany.}
\email{lassueur@mathematik.uni-kl.de}

\thanks{The authors gratefully acknowledge financial support by the funding scheme TU Nachwuchsring 
Individual Funding 2016 granted to the second author by the TU Kaiserslautern.
The first author was also supported by  the Japan Society for
Promotion of Science (JSPS), Grant-in-Aid for Scientific Research
(C)15K04776, 2015--2018.}

\begin{abstract}
Given an odd prime $p$, we investigate the position of simple modules in the stable Auslander-Reiten quiver 
of the principal block of a finite group with non-cyclic abelian Sylow $p$-subgroups. In particular, we prove a 
reduction to finite simple groups. In the case that the characteristic is $3$, we prove that simple modules in 
the principal block all lie at the end of their components. \\
\end{abstract}

\keywords{Stable Auslander-Reiten quiver, simple modules, principal blocks, abelian defect}
\subjclass[2010]{20C20, 16G70}

\maketitle

%-------------------------------------------------------------------------------------------------------
%------------------------------------------ Heading of pages------------------------------------
%-------------------------------------------------------------------------------------------------------

\pagestyle{myheadings}
\markboth{S. Koshitani \& C. Lassueur}{Simple modules in the AR-quiver of principal blocks}

%-------------------------------------------------------------------------------------------------------
%------------------------------------------ INTRODUCTION-------------------------------------
%-------------------------------------------------------------------------------------------------------

%\begin{comment}

\section{Introduction} \label{sec:intro}
The position of simple modules in the stable Auslander-Reiten quiver of the group algebra $kG$ over a field 
$k$ of characteristic $p$  of a finite group $G$ of order divisible by $p$ is a question that was partially 
investigated in the 1980's and the 1990's in a series of articles by different authors. We refer the reader in 
particular to \cite{Kaw97, KMU00, KMU01, BU01} and the references therein. The aim of this note is to come 
back to the following question:

\begin{quesa}\label{Ques:Q1}
Let $B$ be a wild $p$-block of $kG$.  Under which conditions do all simple $B$-modules  lie at the end of 
their connected components in the stable Auslander-Reiten quiver of~$kG$?
\end{quesa}

A main reason of interest in this question lies in the fact that  a simple $kG$-module lies at the end of its 
component if and only if the heart of its projective cover is indecomposable.\par
In this article, we focus attention on the case in which the principal block $B_0(kG)$ is of wild representation 
type with abelian defect groups and the prime $p$ is odd, which amounts to requiring that the $p$-rank of 
$G$ is at least $2$.
The case $p=2$ was treated by Kawata-Michler-Uno in \cite[Theorem 5]{KMU00}. We aim at  extending their 
results and part of their methods to arbitrary primes. Further, we note that the cases when $B_0(kG)$ is of 
finite or tame representation type are well-understood. In the former case, the distance of a simple module to 
the rim of its connected component (a tube of shape $(\IZ/\!e\IZ)A_m$) is a function of its position in the 
Brauer tree of the block, while in the later case the position of the simple modules in their connected 
components is given by Erdmann's work on tame blocks \cite{ERDMANNbook}.\\

Assuming the field $k$ is algebraically closed we prove the following results:

\begin{thma}\label{thm:solvablequotient}\label{MainTheorem}
Let $G$ be a finite group and $N\trianglelefteq G$
a normal subgroup such that $G/N$ is  solvable of $p'$-order.
Let $B$ and $b$ be wild blocks of $kG$ and $kN$ respectively such that $1_B=1_b$. 
If every simple $b$-module lies at the end of its connected component in the stable Auslander-Reiten quiver 
of $kN$, then
every  simple $B$-module lies at the end of its connected component in the stable Auslander-Reiten quiver of 
$kG$.
\end{thma}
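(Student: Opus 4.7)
The plan is to verify, for every simple $B$-module $S$, that the heart $H_S:=\rad(P_S)/\soc(P_S)$ of its projective cover is indecomposable as a $kG$-module; this is equivalent, for non-projective simple modules in a wild block, to $S$ lying at the end of its connected component in the stable Auslander--Reiten quiver of $kG$. The strategy will be to exploit the Clifford correspondence along the normal subgroup $N\trianglelefteq G$ of $p'$-index.

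First I would set up the Clifford data. The condition $1_B=1_b$ forces $b$ to be $G$-invariant, so $G$ permutes the simple $b$-modules by conjugation. Pick a simple summand $S_0$ of $S\!\downarrow_N$, necessarily a simple $b$-module, and let $T:=\operatorname{Stab}_G(S_0)$. Since $T/N$ inherits solvability and $p'$-order from $G/N$, one can extend $S_0$ to a simple $kT$-module $\wt{S}$ with $\wt{S}\!\downarrow_N\cong S_0$ by running Clifford theory step-by-step along a composition series of $T/N$ with cyclic prime-order quotients---the obstruction at each step lies in $H^2(\IZ/q\IZ,k^\times)$, which vanishes since $k$ is algebraically closed. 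This gives $S\cong\Ind_T^G(\wt{S})$ and the multiplicity-free decomposition $S\!\downarrow_N\cong\bigoplus_{g\in G/T}\lconj{g}{S_0}$. Because $[G:T]$ is coprime to $p$, induction from $kT$ to $kG$ commutes with projective covers, so $P_S\cong\Ind_T^G(P_{\wt{S}})$; and since $N$ is normal of $p'$-index, restriction to $N$ commutes with $\rad$ and $\soc$, so
\[
H_S\cong\Ind_T^G(H_{\wt{S}}), \quad H_{\wt{S}}\!\downarrow_N\cong H_{S_0},\quad H_S\!\downarrow_N\cong\bigoplus_{g\in G/T}\lconj{g}{H_{S_0}}.
\]
By hypothesis each $\lconj{g}{H_{S_0}}$ is indecomposable as a $kN$-module, and in particular $H_{\wt{S}}$ is indecomposable as a $kT$-module.

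Finally, suppose for contradiction that $H_S=A\oplus B$ with $A,B\ne 0$ as $kG$-summands. Krull--Schmidt identifies $A\!\downarrow_N\cong\bigoplus_{g\in I}\lconj{g}{H_{S_0}}$ for some subset $I\subseteq G/T$, and since $A$ is a $kG$-module its restriction is invariant under $G$-conjugation up to isomorphism, which forces $I$ to be stable under the transitive left $G$-action on $G/T$. Hence $I=\emptyset$ or $I=G/T$, a contradiction. Thus $H_S$ is indecomposable and $S$ lies at the end of its component. The principal technical obstacle is the extendibility step: securing $\wt{S}\!\downarrow_N\cong S_0$ without multiplicity inflation is where solvability of $G/N$ is essential, via the vanishing of $H^2$ on cyclic $p'$-quotients; a general (non-solvable) $p'$-extension with nontrivial Schur multiplier would spoil the multiplicity-free decomposition underlying the Krull--Schmidt argument. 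A subsidiary subtlety arises when the conjugates $\lconj{g}{H_{S_0}}$ coincide as $kN$-modules for distinct cosets $gT$; this must be ruled out by invoking the finer structure of $H_S$ as $\Ind_T^G(H_{\wt{S}})$ together with the Clifford correspondence for indecomposable modules.
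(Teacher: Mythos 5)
Your approach is structurally very different from the paper's, which reduces by induction to the case $|G/N|$ prime and then derives a contradiction out of Kawata's criterion on Cartan matrices (Theorem~\ref{thm:kawCartan}) through a detailed case analysis of whether the restrictions $S_i{\downarrow}_N$ are simple. Your plan is cleaner in spirit, but it has two genuine gaps.

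\textbf{The extendibility step is false in general.} You claim that a $T$-invariant simple $kN$-module $S_0$ extends to a simple $kT$-module $\widetilde S$ with $\widetilde S{\downarrow}_N\cong S_0$ because $T/N$ is solvable of $p'$-order and $H^2(\IZ/q\IZ,k^\times)=0$. But the relevant obstruction lives in $H^2(T/N,k^\times)$, and this group does \emph{not} vanish for all solvable $p'$-groups $T/N$: for instance, if $T/N\cong(\IZ/q\IZ)^2$ with $q\neq p$ then $H^2(T/N,k^\times)\cong\IZ/q\IZ$. The step-by-step argument along a composition series of $T/N$ fails because the extension constructed at one step need not remain invariant under the next layer, and the necessary intermediate vanishing is exactly the obstruction you are trying to avoid. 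When $S_0$ does not extend, the Clifford correspondent $\widetilde S$ satisfies $\widetilde S{\downarrow}_N\cong e\,S_0$ with $e>1$, which wrecks the multiplicity-free decomposition your Krull--Schmidt argument relies on. The remedy is the one the paper uses: reduce by induction on $|G/N|$ to the case $|G/N|$ prime (so that $T=N$ or $T=G$), where extendibility in the case $T=G$ is automatic because the quotient is cyclic.

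\textbf{The final Krull--Schmidt argument does not close, even after that reduction.} You note yourself the ``subsidiary subtlety'' that distinct conjugates $\lconj{g}{\mathcal H(P(S_0))}$ could be isomorphic as $kN$-modules, and assert it can be ruled out via the Clifford correspondence for indecomposables; this is not justified and I do not see how to justify it. Concretely, suppose $|G/N|=q$ is prime and $T=N$, so $S{\downarrow}_N$ is a sum of $q$ pairwise non-isomorphic conjugates of $S_0$ and $\mathcal H(P(S))=\mathcal H(P(S_0)){\uparrow}^G$. The conjugates $\lconj{g}{\!S_0}$ are distinct, but this does \emph{not} force the hearts $\mathcal H(P(\lconj{g}{\!S_0}))=\lconj{g}{\mathcal H(P(S_0))}$ to be pairwise non-isomorphic; the heart of the projective cover is not in general a faithful invariant of the simple module. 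If $\mathcal H(P(S_0))$ happens to be $G$-stable, then the decomposition of $\mathcal H(P(S)){\downarrow}_N$ into conjugates is a multiple of a single isomorphism type, $G$-invariance of $A{\downarrow}_N$ imposes no constraint beyond equality of multiplicities, and your claimed contradiction evaporates. In fact, $G$-stability of $\mathcal H(P(S_0))$ precisely says $\mathcal H(P(S_0)){\uparrow}^G$ splits into $q$ summands, which is the undesired conclusion, not an absurdity. The paper sidesteps this entirely: it never tries to prove indecomposability of $\mathcal H(P(S))$ directly by restriction, but instead shows that if $S$ sat on the $n$-th row with $n\geq2$, the rigid projective-cover and Cartan-matrix constraints from Theorem~\ref{thm:kawCartan} (uniserial $P(S_i)$'s, the decomposition in Corollary~\ref{cor:toKawataCriterion}, Ext computations, and a Brauer height argument) cannot all be satisfied over $kN$.

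Until you either justify that $\lconj{g}{\mathcal H(P(S_0))}\not\cong \mathcal H(P(S_0))$ for $g\notin T$, or replace the endgame by something not sensitive to this coincidence, the proof is incomplete.
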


\begin{thma}\label{thm:redsimples}
Let $p$ be an odd prime.  Let $G$ be a finite group with non-cyclic abelian Sylow $p$-subgroups and $O_{p'}(G)=1$. Write $O^{p'}(G)=Q \times H_1\times \cdots \times H_m$ ($m\geq 0$), where $Q$ is an abelian $p$-group and  $H_i$ is a non-abelian finite simple group with non-trivial Sylow $p$-subgroups for each $1\leq i\leq m$. Assume that one of the following conditions is satisfied:
\begin{enumerate}
\item[\rm(i)] $Q\neq 1$; or
\item[\rm(ii)] $Q=1$ and $m\geq 2$; or
\item[\rm(iii)] $Q=1$, $m=1$ and every simple $B_0(kH_1)$-module lies at the end of its connected 
component in the stable Auslander-Reiten quiver of $kH_1$.
\end{enumerate}
Then  every simple $B_0(kG)$-module lies at the end of its connected component in the stable Auslander-Reiten quiver of $kG$.
\end{thma}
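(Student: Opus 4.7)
The plan is first to reduce, via Theorem \ref{MainTheorem}, to the normal subgroup $N := O^{p'}(G)$. Two preliminary observations are needed: that $G/N$ is a solvable $p'$-group, and that $1_{B_0(kG)} = 1_{B_0(kN)}$. The $p'$-order of $G/N$ is immediate from the definition of $O^{p'}$; solvability follows because $O_{p'}(G) = 1$ together with the given product decomposition identifies $N$ with the generalized Fitting subgroup $F^{*}(G)$, so that $G/N$ embeds into $\Out(N)$, which is solvable by Schreier's conjecture (a consequence of the classification of finite simple groups). The identity of principal block idempotents under passage to a normal subgroup of $p'$-index is standard (see, e.g., Nagao--Tsushima). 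Thus Theorem \ref{MainTheorem} reduces the task to proving that every simple $B_0(kN)$-module lies at the end of its connected component in the stable AR-quiver of $kN$.

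Next, the direct product decomposition $N = Q \times H_1 \times \cdots \times H_m$ yields
$$B_0(kN) \cong B_0(kQ) \otimes_k B_0(kH_1) \otimes_k \cdots \otimes_k B_0(kH_m),$$
and every simple $B_0(kN)$-module is a tensor product $k_Q \otimes S_1 \otimes \cdots \otimes S_m$, with $k_Q$ the trivial $kQ$-module and $S_i \in \Irr(B_0(kH_i))$. The projective cover factors correspondingly, and the property of ``lying at the end of an AR-component'' is equivalent to the indecomposability of the heart $\rad P / \soc P$ of the projective cover $P$.

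In case (iii), where $N = H_1$, the conclusion is immediate from the hypothesis. Cases (i) and (ii) reduce to a tensor-product lemma: whenever $B_0(kN)$ decomposes as a tensor product of two or more non-trivial blocks, the heart of each simple module's projective cover is indecomposable. For case (i), I would use the known indecomposability of the heart of the trivial module in $kQ$ for a non-trivial abelian $p$-group $Q$ with $p$ odd, combined with a transfer of this rigidity through the tensor product with the remaining $B_0(kH_i)$; for case (ii), it is the presence of $m \geq 2$ simple-group factors that supplies an analogous structural rigidity, without any assumption on the individual $S_i$.

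The main obstacle will be this tensor-product lemma. The heart of $P_0 \otimes P_1 \otimes \cdots \otimes P_m$ equals the submodule $\sum_i P_0 \otimes \cdots \otimes \rad P_i \otimes \cdots \otimes P_m$ of $P_0 \otimes \cdots \otimes P_m$ modulo $\soc P_0 \otimes \cdots \otimes \soc P_m$, and its indecomposability is not apparent from the individual factors; in particular, the natural inclusion-exclusion description produces nontrivial overlaps $\rad P_i \otimes \rad P_j$ whose contribution must be carefully tracked. I expect to follow the pattern of Kawata--Michler--Uno \cite{KMU00} for the prime $2$ case, proceeding by induction on $m$ to the base case of two tensor factors, and there analyzing the AR-sequence together with the self-injectivity of both factors to exclude a nontrivial direct-sum decomposition of the middle term.
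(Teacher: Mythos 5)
Your reduction strategy has two independent gaps, each of which by itself blocks the proof.

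\textbf{The reduction to $N = O^{p'}(G)$ via Theorem~\ref{thm:solvablequotient} does not go through.} You assert that $G/O^{p'}(G)$ is solvable and that $1_{B_0(kG)} = 1_{B_0(kO^{p'}(G))}$ is ``standard''; neither is correct. For solvability: $G/O^{p'}(G)$ is a $p'$-group, but it need not be solvable. Take $p=7$ and $G = (\IZ/7)^3 \rtimes A_5$ with $A_5$ acting faithfully; then $O_{7'}(G)=1$, the Sylow $7$-subgroup is non-cyclic abelian, $O^{7'}(G) = (\IZ/7)^3$, and $G/O^{7'}(G) \cong A_5$ is non-solvable. (Your embedding into $\Out(N)$ does not save this: $\Out$ of a direct product can contain non-solvable permutation groups of isomorphic factors; more to the point, the abelian $p$-group factor can admit a non-solvable $p'$-group of automorphisms.) For the block idempotent identity: if $\tilde{b} := B_0(kO^{p'}(G))$, then $1_{\tilde b}$ lies in $Z(kG)$ and decomposes as $1_{B_0(kG)}$ plus possibly several non-principal block idempotents of $kG$; these extra terms vanish only if $G[\tilde b] = O^{p'}(G)$. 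One can have $C_G(P) \not\subseteq O^{p'}(G)$ (e.g.\ $H\rtimes\langle\alpha\rangle$ with $\alpha$ an outer $p'$-automorphism of a simple $H$ centralizing a Sylow $p$-subgroup), in which case the identity fails. The paper copes with this by working with the larger normal subgroup $N := O^{p'}(G)\,C_G(P)$ (Dade's group $G[\tilde b]$ of Lemma~\ref{G[b]}), where $1_{B_0(kG)} = 1_{B_0(kN)}$ does hold, and then invoking the Alperin--Dade theorem (Lemma~\ref{lem:alpdadered}) to pass back to $O^{p'}(G)$. Moreover, the paper only applies Theorem~\ref{thm:solvablequotient} in the periodic (tube) case, where Corollary~\ref{cor:tubesO^p'} forces $O^{p'}(G)$ to be a \emph{single} simple group $H_1$; Schreier's conjecture then gives solvability of $G/H_1$. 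In the $\IZ A_\infty$ case the paper uses a direct restriction lemma ([KMU00, Lemma~1.5]) and never needs solvability.

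\textbf{The tensor-product step is a real gap, and the paper does something different.} You flag the indecomposability of the heart of a tensor product as the ``main obstacle'' but propose only to follow KMU00's pattern by induction on tensor factors; this does not work in the form stated, precisely because of the inclusion-exclusion overlaps you identify. Even in case~(i), arguing that indecomposability of $\mathcal H(kQ)$ ``transfers through the tensor product'' is not a lemma the paper proves or needs; the paper disposes of case~(i) in one line by noting that $Q = O_p(O^{p'}(G)) \neq 1$ is a normal $p$-subgroup and applying Kawata's Theorem (Theorem~\ref{thm:end}(a)). For case~(ii) with $m\geq 2$, the paper's Lemma~\ref{lem:nonperiodicProduct} avoids the heart of a tensor product entirely: it uses Kn\"orr's theorem and Okuyama--Uno to show that all modules in $\Gamma_s(S)$ and $\Gamma_s(\Omega(S))$ have full vertex, then derives a contradiction from the tensor decomposition $S = S_0 \otimes S_m$ by showing $S_0$ would have to be a projective module in a block of positive defect, forcing relative projectivity of $S$ with respect to a proper subgroup. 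You should also split at the outset between periodic and non-periodic simple modules (as the paper does); the periodic/tube case forces $m=1$ by Corollary~\ref{cor:tubesO^p'}, which is why only the $\IZ A_\infty$ components ever meet the $m\geq 2$ situation.
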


\begin{cora}\label{thm:corredsimples}
Let $p$ be an odd prime. Assume that every  simple $B_0(kH)$-module lies at the end of its connected 
component in the stable Auslander-Reiten quiver of $kH$ for every non-abelian finite simple group $H$ with 
non-cyclic abelian Sylow $p$-subgroups.  Then every simple $B_0(kG)$-module lies at the end of its 
connected component in the stable Auslander-Reiten quiver of $kG$ for any finite group $G$ with non-cyclic 
abelian Sylow $p$-subgroups.
\end{cora}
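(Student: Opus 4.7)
The plan is to deduce the corollary directly from Theorem~B by means of a Fong-type reduction to the case $O_{p'}(G)=1$. Let $G$ be a finite group with non-cyclic abelian Sylow $p$-subgroups, and set $N := O_{p'}(G)$ and $\bar{G} := G/N$. A classical fact on principal blocks says that, $N$ being a normal $p'$-subgroup, the canonical surjection $G \twoheadrightarrow \bar{G}$ induces an isomorphism of $k$-algebras $B_0(k\bar{G}) \xrightarrow{\sim} B_0(kG)$; this transports simple modules and the stable Auslander-Reiten quiver bijectively, preserving the property of lying at the end of a connected component. Since the Sylow $p$-subgroups of $\bar{G}$ are isomorphic to those of $G$, hence still non-cyclic abelian, we may replace $G$ by $\bar{G}$ and assume from now on that $O_{p'}(G)=1$.

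Under this assumption, standard results on the generalised Fitting subgroup, combined with the fact that the Sylow $p$-subgroups are abelian, yield a decomposition of the form
\[
O^{p'}(G) = Q \times H_1 \times \cdots \times H_m
\]
required by Theorem~B, with $Q$ an abelian $p$-group and $H_1,\dots,H_m$ non-abelian finite simple groups with non-trivial Sylow $p$-subgroups $P_i \in \Syl_p(H_i)$. Because $G/O^{p'}(G)$ is a $p'$-group, a Sylow $p$-subgroup of $G$ coincides with one of $O^{p'}(G)$, namely $Q\times P_1\times\cdots\times P_m$; in particular this product is non-cyclic abelian.

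It then suffices to check that one of the three conditions of Theorem~B holds. In cases~(i) ($Q\neq 1$) and~(ii) ($Q=1$, $m\geq 2$) nothing further is needed. If $Q=1$ and $m=1$, then $O^{p'}(G)=H_1$ is a non-abelian finite simple group whose Sylow $p$-subgroup agrees with that of $G$, and is therefore non-cyclic abelian. The hypothesis of the corollary then applies to $H_1$, so every simple $B_0(kH_1)$-module lies at the end of its connected component, and condition~(iii) of Theorem~B is satisfied. In every case Theorem~B delivers the desired conclusion for $G$.

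The main delicate point in this plan is the very first step, namely the compatibility of the reduction modulo $O_{p'}(G)$ with simple modules and with the Auslander-Reiten structure of the principal block; this compatibility is classical but needs to be explicitly invoked, and should be directly available from the sources already cited in the paper.
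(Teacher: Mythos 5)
Your proposal is correct and follows essentially the same route as the paper, which derives the corollary directly from Theorem~\ref{thm:redsimples} after reducing to $O_{p'}(G)=1$ via the isomorphism $B_0(kG)\cong B_0\bigl(k(G/O_{p'}(G))\bigr)$. Your write-up simply makes explicit the elementary bookkeeping that the paper leaves unstated, namely that conditions (i)--(iii) of Theorem~\ref{thm:redsimples} are exhaustive (the case $Q=1$, $m=0$ being impossible since $p$ divides $|G|$) and that when $Q=1$, $m=1$ the simple factor $H_1$ inherits the non-cyclic abelian Sylow $p$-subgroup of $G$, so the corollary's hypothesis applies to it.
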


We note that if $p=2$, then the analogues of Theorem~\ref{thm:redsimples} and 
Corollary~\ref{thm:corredsimples} were essentially proven by Kawata-Michler-Uno \cite{KMU00}, although not 
stated in these terms.
As a corollary, we also obtain the equivalent of \cite[Theorem 5(a)]{KMU00} for the prime $3$.

\begin{thma}\label{thm:p=3}
Assume $p=3$. Let $G$ be a finite group with abelian Sylow $3$-subgroups. If $B_0(kG)$ is a wild $3$-
block, then every simple $B_0(kG)$-module lies at the end of its connected component in the stable 
Auslander-Reiten quiver of $kG$.
\end{thma}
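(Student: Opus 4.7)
The strategy is to combine Corollary~\ref{thm:corredsimples} with a case-by-case verification for non-abelian finite simple groups via the Classification of Finite Simple Groups (CFSG).

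First I would observe that under the hypotheses the Sylow $3$-subgroup $P$ of $G$ must be non-cyclic, because a block with cyclic defect is of finite representation type, contradicting the wildness of $B_0(kG)$. Hence Corollary~\ref{thm:corredsimples} applies and the problem reduces to proving the following statement: for every non-abelian finite simple group $H$ with non-cyclic abelian Sylow $3$-subgroup, every simple $B_0(kH)$-module lies at the end of its connected component in the stable Auslander-Reiten quiver of $kH$.

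Next, I would invoke CFSG to enumerate the relevant simple groups. The resulting list is short and well known: it contains the alternating groups $A_n$ for $6\leq n\leq 8$ (all having elementary abelian Sylow $3$-subgroup of order $9$); a handful of sporadic groups such as $M_{11}$, $M_{22}$, $M_{23}$, $HS$, $McL$, $J_2$, $Co_3$; and certain groups of Lie type in characteristic coprime to $3$, chiefly $\PSL_n(q)$ and $\PSU_n(q)$ with appropriate congruences on $q$, together with a few exceptional families. For each $H$ in the list the verification proceeds by one of: (a)~direct inspection of the decomposition matrix and Loewy structure of the projective indecomposable $B_0(kH)$-modules, which is available from published tables or \GAP{} for small examples; (b)~transfer from the Brauer correspondent $b:=B_0(kN_H(P))$ via a Puig or splendid/perverse derived equivalence---many instances are covered by known cases of Brou\'e's abelian defect group conjecture. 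Note that since $P\trianglelefteq N_H(P)$ is abelian and $E:=N_H(P)/C_H(P)$ is a $3'$-group, $b$ is Morita equivalent to a twisted group algebra of $P\rtimes E$, for which the assertion is straightforward (the heart of every projective indecomposable module is readily seen to be indecomposable).

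The main obstacle will be the case-by-case analysis for the groups of Lie type: one must either appeal to the known cases of Brou\'e's conjecture and control how simple modules are sent by the resulting equivalence (splendid or perverse equivalences are preferable, as they preserve enough structure to transport the ``at the end of component'' property), or argue ad hoc from the explicit $3$-modular representation theory of the group. By contrast, the alternating, sporadic, and smallest Lie-type cases should follow from routine inspection of the projective indecomposable $B_0(kH)$-modules, completing the argument.
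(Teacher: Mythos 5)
Your overall strategy matches the paper's: reduce to non-abelian finite simple groups via Corollary~\ref{thm:corredsimples}, enumerate the relevant simple groups by CFSG, and check each one. The paper's proof cites Corollary~\ref{thm:corredsimples} together with Proposition~\ref{prop:B0nonabsimple}, the latter being exactly the case-by-case analysis. However, there are two substantive problems with your sketch.

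First, your list of groups is wrong. The correct list (Proposition~\ref{prop:simpleabdef}, due to Fong) is: $\fA_7$, $\fA_8$, $M_{11}$, $M_{22}$, $M_{23}$, $HS$, $O'N$; $\PSL_3(q)$ with $3\|(q-1)$; $\PSU_3(q^2)$ with $3\|(q+1)$; $\PSp_4(q)$ with $3\mid(q\pm 1)$; $\PSL_4(q)$, $\PSU_4(q^2)$, $\PSL_5(q)$, $\PSU_5(q^2)$ under suitable congruences; and $\PSL_2(3^n)$, $n\geq 2$ (which absorbs $\fA_6\cong\PSL_2(9)$). You list $McL$, $J_2$, and $Co_3$, but these have \emph{non-abelian} Sylow $3$-subgroups and so do not appear; and you omit $O'N$, the $\PSp_4$ families, and the defining-characteristic family $\PSL_2(3^n)$ entirely. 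The last of these needs a genuinely different argument (the paper invokes Theorem~\ref{thm:end}(c) on groups of Lie type in defining characteristic), so its omission is not merely cosmetic.

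Second, the verification mechanism you propose is not airtight. You suggest transporting the ``lies at the end of its AR-component'' property along a splendid or perverse \emph{derived} equivalence coming from Brou\'e's conjecture. A derived equivalence between symmetric algebras does induce a stable equivalence of Morita type and hence an isomorphism of stable AR-quivers, but such equivalences need not send simple modules to simple modules, so they do not by themselves transport the desired property. The paper is careful to use only \emph{Morita} (indeed Puig) equivalences for this purpose: for $\PSU_3$, $\PSp_4(q)$ with $3\mid(q-1)$, $\PSU_4$, $\PSU_5$ it cites the Puig equivalence with the Brauer correspondent established in \cite{KY10} and then applies Theorem~\ref{thm:end}(a). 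For the remaining families the paper does not use Brou\'e-type equivalences at all: instead, it verifies directly that the Cartan matrix of $B_0$ cannot have the shape forced by Kawata's criterion (Theorem~\ref{thm:kawCartan}(b)), using GAP for the sporadic/alternating cases and published decomposition matrices (Kunugi, White, Okuyama--Waki, James, Fong--Srinivasan) for the Lie-type cases, plus a periodicity argument via Carlson's theorem for $\PSL_4$, $\PSL_5$. Your proposal never mentions Kawata's criterion, which is the actual workhorse, and so leaves the bulk of the case analysis essentially unaddressed.
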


The paper is organised as follows: in Section~2, we recall the state of knowledge on the subject and extend a 
result of Kawata's \cite[Theorem 1.5]{Kaw97} to describe more precisely the indecomposable summands of 
the heart of the projective cover of a simple module not lying on the rim of its component. In Section~3, we 
consider groups having a solvable quotient of $p'$-order and prove Theorem~\ref{thm:solvablequotient}. In 
Sections~4 and~5, we proceed to a reduction of Question~\ref{Ques:Q1} for principal blocks to the case of 
finite non-abelian simple groups and prove Theorem~\ref{thm:redsimples} and 
Corollary~\ref{thm:corredsimples}. Finally in Section~6 we deal with the case $p=3$ and prove 
Theorem~\ref{thm:p=3}.

%-------------------------------------------------------------------------------------------------------
%------------------ PRELIMINARIES AND NOTATIONS------------------------------------
%-------------------------------------------------------------------------------------------------------

\section{Notation and preliminary results} \label{sec:nota} 

Throughout, we assume that $k$ is an algebraically closed field of characteristic $p>0$, and we let $G$ 
denote a finite group of order divisible $p$. 
All modules are assumed to be finitely generated right modules. For a $p$-block $B$, we write $1_B$ for the 
corresponding block idempotent and ${\mathrm{IBr}}(B)$ for the set of isomorphism classes of simple $kB$-
modules. Furthermore, unless otherwise specified, we assume $B_0:=B_0(kG)$, the principal block of $kG$, 
is wild. (When the defect groups of $B$ are abelian, we may therefore assume that  a Sylow $p$-subgroup of 
$G$ is non-cyclic, or equivalently that the $p$-rank of $G$ is at least $2$).
We denote by $k_G$ the trivial $kG$-module.
\par
We let $J:=J(kG)$ denote  the Jacobson radical of $kG$.  For a $kG$-module $U$, we 
define $J(kG)^0:=kG$  and for any integer 
$i\geq 0$, $\mathrm{soc}^i(U):=\{u\in U\,|\,u\,J^i=\{0\}  \}$, 
then inductively for any $i\geq 1$, we write
\begin{align*}
L_i(U):= U\,J^{i-1}/U\,J^i \text{ and }
S_i(U):= {\mathrm{soc}}^i(U)/{\mathrm{soc}}^{i-1}(U)\, 
\end{align*}
for the $i$-th {\it Loewy layer}  and the $i$-th {\it socle layer} of $U$, respectively.
We use throughout without further mention the following well-known properties:

\begin{lem}Assume $N\trianglelefteq G$ of index prime to $p$.
\begin{enumerate}
  \item[\rm(a)] We have  $J=\tilde J\,kG=kG\,\tilde J$ where  $\tilde J:=J(kN)$. \label{radical}
  \item[\rm(b)] \label{LoewySocle}
Let  $X$ be a $kG$-module and $Y$ a $kN$-module, then for any $i\geq 1$ we have
$$L_i(X){\downarrow}_N=L_i(X{\downarrow}_N)\text{ and }
  S_i(X){\downarrow}_N=S_i(X{\downarrow}_N)$$
 and
$$L_i(Y){\uparrow}^G=L_i(Y{\uparrow}^G) \text{ and } S_i(Y){\uparrow}^G=S_i(Y{\uparrow}^G)\,.$$
\end{enumerate}
\end{lem}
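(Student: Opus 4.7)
The plan is to prove part (a) by the standard two-inclusion argument, then use (a) as a black box to get (b) by a direct computation with the action of $J$ on submodules and tensor products.

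For part (a), first observe that since $N\trianglelefteq G$, the Jacobson radical $\tilde J=J(kN)$ is $G$-stable under conjugation (being a characteristic ideal of $kN$); hence $g\tilde J=\tilde J g$ for every $g\in G$, which implies $\tilde J\,kG=kG\,\tilde J$ is a two-sided ideal of $kG$. As $\tilde J$ is nilpotent, so is $\tilde J\,kG$, giving $\tilde J\,kG\subseteq J$. For the reverse inclusion, note that the quotient $kG/\tilde J\,kG$ is a crossed product of the semisimple algebra $kN/\tilde J$ with the group $G/N$; since $|G/N|$ is invertible in $k$, a Maschke-type argument shows this crossed product is semisimple, hence $J\subseteq\tilde J\,kG$.

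For part (b), iterate (a) to get $J^i=\tilde J^i\,kG=kG\,\tilde J^i$ for every $i\geq 1$. For restriction of a $kG$-module $X$, I would observe that $XJ^i=X\tilde J^i\,kG=X\tilde J^i$ (since $X\cdot kG=X$), so the chains defining the radical filtration of $X$ and of $X{\downarrow}_N$ agree, proving $L_i(X){\downarrow}_N=L_i(X{\downarrow}_N)$. Exactly the same argument shows $x\,J^i=0\iff x\,\tilde J^i=0$, giving $\mathrm{soc}^i(X)=\mathrm{soc}^i(X{\downarrow}_N)$ as subsets of $X$, whence the socle-layer identity.

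For induction, given a $kN$-module $Y$, compute inside $Y\uparrow^G=Y\otimes_{kN}kG$ using that $g\tilde J^i=\tilde J^i g$ for all $g\in G$:
\[
(Y\uparrow^G)J^i=(Y\otimes_{kN}kG)\cdot\tilde J^i\,kG=Y\otimes_{kN}\tilde J^i\,kG=(Y\tilde J^i)\otimes_{kN}kG=(Y\tilde J^i)\uparrow^G.
\]
Exactness of induction then yields $L_i(Y\uparrow^G)=(Y\tilde J^{i-1}/Y\tilde J^i)\uparrow^G=L_i(Y)\uparrow^G$. For the socle, write a general element $z\in Y\uparrow^G$ using a fixed set of coset representatives $\{g_j\}$ of $N$ in $G$ as $z=\sum_j y_j\otimes g_j$; using again $g_j\tilde J^i=\tilde J^i g_j$ and moving $\tilde J^i$ across the tensor product, the condition $zJ^i=0$ becomes equivalent to $y_j\tilde J^i=0$ for all $j$, i.e. $y_j\in\mathrm{soc}^i(Y)$; this identifies $\mathrm{soc}^i(Y\uparrow^G)$ with $\mathrm{soc}^i(Y)\uparrow^G$ as submodules of $Y\uparrow^G$.

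The main potential pitfall is purely bookkeeping: in each of the four identities of (b) one must carefully use the $G$-invariance of $\tilde J$ (so that $kG$ and $\tilde J^i$ commute inside $kG$) and be precise about whether equalities hold as submodules or only up to isomorphism. Once (a) is available and this commutation is invoked correctly, all four identities follow by short formal manipulations with no further input.
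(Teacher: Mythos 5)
Your proof is correct and follows the same route the paper intends: for (a) the paper simply cites Villamayor's theorem (your two-inclusion argument via nilpotency and a Maschke-type argument on the crossed product $kG/\tilde J\,kG$ is exactly the standard proof of that result), and for (b) the paper asserts it ``follows from (a),'' which is precisely the commutation/iteration computation you carry out in detail. The only thing worth making explicit is that in the induction step you are implicitly using that $kG$ is free (hence flat) as a $kN$-module, so that $(Y\tilde J^{i})\uparrow^{G}\to Y\uparrow^{G}$ is injective and the quotient can be taken inside $Y\uparrow^{G}$; this is automatic but should be mentioned to justify treating $(Y\tilde J^{i})\uparrow^{G}$ as a submodule.
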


\begin{proof}
Part (a)  is a well-known result of Villamayor \cite{Vill59} and part (b) follows from~(a).   
\end{proof}

Given a $kG$-module $M$, we denote  by $\Omega^n(M)$ $(n\in\IZ)$ its $n$-th Heller translate.  Given a 
simple $kG$-module $S$, we denote by $P(S)$ its projective cover and by $\mathcal{H}(P(S))$ the heart of 
$P(S)$, that is  
$\mathcal{H}(P(S))=P(S)J/\soc(P(S))$.
\par
We let $\Gamma_s(kG)$, resp. $\Gamma_s(B)$,  denote the stable Auslander-Reiten quiver of $kG$, resp. 
of the $p$-block $B$, and denote by $\Gamma_s(M)$ the connected component of $\Gamma_s(kG)$ 
containing a given indecomposable $kG$-module $M$. Moreover, by convention, we use the terminology  
\emph{AR-component} to refer to a connected component of $\Gamma_s(kG)$.\\

Erdmann \cite{Erd95} proved that all components of the stable Auslander-Reiten quiver belonging to a wild 
block have tree class $A_{\infty}$, that is are of the form $\IZ A_{\infty}$ or infinite tubes $\IZ A_{\infty}/\langle \tau^a\rangle$ of rank $a$, where $\tau=\Omega^2$ is the Auslander-Reiten shift.
\par
In a component with tree class $A_\infty$  an indecomposable non-projective $kG$-module $M$ is said to lie 
\emph{at the end (or on the rim)} of its AR-component if the projective-free part of the middle term $X_M$ of 
the Auslander-Reiten sequence
$$\mathcal{A}(M): 0\lra \Omega^2(M)\lra X_M\lra M\lra 0$$
terminating at $M$ is indecomposable. For a non-projective simple $kG$-module $S$, the Auslander-Reiten 
sequence terminating at $\Omega^{-1}(S)$ is of the form
$$\mathcal{A}(\Omega^{-1}(S)): 0\lra \Omega(S)\lra \mathcal{H}(P(S))\oplus P(S)\lra \Omega^{-1}(S)\lra 0$$
and is called the \emph{standard sequence associated to $S$}.  In this set up, clearly a simple module $S$ 
lies at the end of its component if and only if $\mathcal{H}(P(S))$ is indecomposable, and $S$ lies in a tube if 
and only if $S$ is periodic (i.e. $\Omega$-periodic).\\

We also recall that for a selfinjective algebra the shape of the components of the stable Auslander-Reiten 
quiver
is an invariant of its Morita equivalence class. By the above, the property of lying on the rim of its AR-component for a non-projective simple module is also invariant under Morita equivalence.\par

Simple $kG$-modules are known to lie on the rim of their $AR$-components in the following cases:

\begin{thm}\label{thm:end}
Let $B$ be a wild $p$-block of $kG$. Then every simple $B$-module lies at the end of its  AR-component in 
all of the following cases:
\begin{enumerate}
  \item[\rm(a)] $G$ has a non-trivial normal $p$-subgroup 
  (\cite[Theorem 2.1]{Kaw97}); 
  \item[\rm(b)] $G$ is $p$-solvable (\cite[Corollary 2.2]{Kaw97});
  \item[\rm(c)] $G$ is a perfect finite group of Lie type in the defining characteristic and $B$ has full defect 
(\cite[Theorem]{KMU01});
  \item[\rm(d)] $G$ has an abelian Sylow $2$-subgroup and $B$ is 
the principal $2$-block. \cite[Theorem~5]{KMU00};
  \item[\rm(e)] $G$ is a symmetric group, an alternating group or a Schur cover of the latter groups, and the 
defect of $B$ is divisible by $p^3$ (\cite[\S 5]{BU01}). 
  \end{enumerate}
\end{thm}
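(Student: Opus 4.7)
The approach is not to prove Theorem~\ref{thm:end} from scratch, since it is a compilation of results already established in the literature; rather, my plan is to verify each of (a)--(e) by direct appeal to the citation attached to it, and to point out the common thread that unifies them. In each case the goal is to control the structure of $\mathcal{H}(P(S))$ well enough to force indecomposability, and in each case the wildness hypothesis is what rules out the ``small'' counterexamples coming from finite or tame representation type.

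For (a), I would recall Kawata's argument: a non-trivial normal $p$-subgroup $N\trianglelefteq G$ acts trivially on every simple $kG$-module $S$, so $S$ is inflated from $k[G/N]$ and the restriction $P(S){\downarrow}_N$ is a sum of copies of the regular module $kN$. This rigid structural information, combined with the fact that $B_0$ is wild, is enough to force $\mathcal{H}(P(S))$ indecomposable; this is \cite[Theorem~2.1]{Kaw97}. For (b), one reduces to (a) using the fact that a $p$-solvable group with $O_{p'}(G)=1$ automatically satisfies $O_p(G)\neq 1$, so \cite[Corollary~2.2]{Kaw97} is immediate. For (c), (d), (e) I would simply invoke the respective papers: the defining-characteristic full-defect case rests on the structure of parabolic subgroups and Steinberg-type considerations in \cite{KMU01}; the abelian Sylow $2$-subgroup case in \cite{KMU00} relies on a CFSG-based reduction to simple groups followed by a case-by-case analysis; and \cite{BU01} handles symmetric and alternating groups via Specht-module combinatorics.

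The ``hard part'' is therefore entirely inside the cited papers. The only genuine obligation at this point is bookkeeping: one must check that the hypotheses as stated in the present paper (most notably that $B$ is a \emph{wild} block) are compatible with the hypotheses under which each cited result was originally proved, since several of the references phrase their conclusions in terms of specific defect conditions (for example, ``defect divisible by $p^3$'' in (e)). Granting that match-up, the theorem is immediate from the references, and no new argument is required here.
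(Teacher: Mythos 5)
Your proposal matches the paper exactly: Theorem~\ref{thm:end} is stated in the paper as a compilation of results from the cited references, with no proof supplied, and your plan of verifying each item against its citation and checking hypothesis compatibility is precisely what the authors (implicitly) do. The brief sketches you give for (a) and (b) are consistent with Kawata's arguments and do not change the nature of the step.
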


\noindent Moreover, we will use the following computational criterion throughout:

\begin{thm}[{}{Kawata's Criterion on Cartan matrices \cite[Theorem 1.5]{Kaw97}}]\label{thm:kawCartan}
Let $B$ be a wild $p$-block of $kG$. Suppose that there exists a simple $B$-module $S$ lying on the $n$-th 
row from the end of  $\Gamma_s(S)$, where $n\geq 2$ is minimal with this property. Then there exist 
pairwise non-isomorphic simple $B$-modules $S_2,\ldots, S_n$ with the following properties:
\begin{enumerate}
  \item[\rm(a)]  For each $2\leq i \leq n$$, S_i\cong \Omega^{2(i-2)}(S_2)$ and   lies at the end of $\Gamma_s(\Omega(S))$;
  \item[\rm(b)]  The projective covers of $P(S_i)$ of the simple modules 
  $S_i \ (2\leq i\leq n)$ are uniserial of length $n+1$ 
  with the following Loewy structure:

$$
{
P(S_2)=\boxed{
\begin{matrix}  S_2\\ S_3\\ \vdots \\ S_n \\ S \\ S_2
\end{matrix} 
             }\,,
\ 
P(S_3)=\boxed{
\begin{matrix} S_3\\ \vdots \\ S_n \\ S \\ S_2 \\S_3
\end{matrix} 
             }\,, \ 
\cdots, \ 
P(S_n)=\boxed{
\begin{matrix} S_n\\ S\\S_2\\ \vdots \\ \ \ \,S_{n-1} \\ S_n
\end{matrix} 
             }\,.   
}                  
$$
The Cartan matrix of $B$ is given by
$$\left(\begin{matrix}2 & 1 & \cdots & \cdots & 1 & 0 & \cdots & 0 \\
1 & 2 & \ddots &  & \vdots & \vdots &  & \vdots \\
\vdots & \ddots & \ddots & \ddots & \vdots & \vdots &  & \vdots \\
\vdots &  & \ddots & 2 & 1 & 0 & \cdots & 0 \\
1 & \cdots & \cdots & 1 & \ast & \cdots & \cdots & \ast \\
0 & \cdots &  \cdots & 0 & \vdots &  &  & \vdots \\
\vdots &  &  & \vdots & \vdots &  &  & \vdots \\
0 & \cdots & \cdots & 0 & \ast & \cdots & \cdots & \ast
\end{matrix}\right)\,,$$
where the columns are  labelled by $S_n,\ldots, S_2,S,\ldots$ in this order.
\end{enumerate}
\end{thm}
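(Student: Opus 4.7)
The plan is to combine Erdmann's structure theorem for AR-components of wild blocks with a careful analysis of the middle term of $\mathcal{A}(\Omega^{-1}(S))$, and then inductively descend from row $n$ to the rim.

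First, I would observe that $\Gamma_s(S)$ is either of shape $\IZ A_\infty$ or a tube $\IZ A_\infty/\langle \tau^a\rangle$, so every vertex on row $j \geq 2$ admits exactly two AR-predecessors, lying on rows $j-1$ and $j+1$. Applied to $\Omega^{-1}(S)$, which sits on row $n$, the standard sequence forces the decomposition
$$\mathcal{H}(P(S)) = M_{n-1} \oplus M_{n+1},$$
with $M_{n\pm 1}$ indecomposable non-projective modules on rows $n\pm 1$. I would then iterate: apply the same analysis to the AR-sequence ending at $\tau^{-1}(M_j)$ for $j = n-1, n-2, \ldots, 2$, and use the minimality of $n$ to rule out that any of $M_{n-1}, \ldots, M_2$ is simple. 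This descent eventually produces indecomposable modules on the rim of $\Gamma_s(\Omega(S))$; tracking the shift $\tau = \Omega^2$ along the descent identifies these rim objects with $\Omega$-translates of simples $S_2, \ldots, S_n$ forming a single $\Omega^2$-orbit of the expected shape $S_i \cong \Omega^{2(i-2)}(S_2)$.

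Next, I would read off the Loewy structure of each $P(S_i)$ from the descent. The tops of the $M_j$ and of their $\tau$-translates are simple, since composition factors of the second Loewy layer of $P(T)$ are governed by $\Ext^1(T,-)$; and the uniseriality of $P(S_i)$ follows because each $S_i$ sits at the end of its component, forcing $\mathcal{H}(P(S_i))$ to be indecomposable. Combining the descent order with this rigidity yields that $P(S_i)$ is uniserial of length $n+1$ with the prescribed cyclic composition series $S_i, S_{i+1}, \ldots, S_n, S, S_2, \ldots, S_i$; pairwise non-isomorphism of the $S_i$ then follows from their occupying distinct columns of the AR-quiver. The Cartan matrix drops out immediately by reading multiplicities from these $n-1$ uniserial projectives, with the $\ast$-entries absorbing the column of $P(S)$ and the columns of simples outside $\{S, S_2, \ldots, S_n\}$.

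The hard part will be making the descent fully rigorous: I must show simultaneously that each $M_j$ has a \emph{simple} top, that the modules reached on the rim really are $\Omega$-translates of simples (rather than indecomposables with several composition factors in their top layer), and that this propagates to force the full cyclic uniserial structure of the $P(S_i)$. This bookkeeping rests essentially on the minimality of $n$, which rules out intervening simples on rows $2, \ldots, n-1$, and on the AR-quiver rigidity of $\IZ A_\infty$, and is where the technical content of Kawata's original proof is concentrated.
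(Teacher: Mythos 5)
This is a cited result: the paper attributes the statement to Kawata \cite[Theorem~1.5]{Kaw97} and does not reprove it, so there is no in-paper proof to compare your attempt against. Your review will therefore be judged against what the argument has to accomplish.

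Your overall strategy---standard sequence of a simple module plus rigidity of $\IZ A_\infty$ (or tube) components plus minimality of $n$---points in the right direction, and you correctly identify the two indecomposable summands $M_{n-1}\oplus M_{n+1}$ of $\mathcal H(P(S))$ as the non-projective part of the AR-sequence ending at $\Omega^{-1}(S)$. But the ``descent'' step as you phrase it is circular. You propose to ``apply the same analysis to the AR-sequence ending at $\tau^{-1}(M_j)$,'' but for an arbitrary indecomposable $M_j$ on row $j$ the AR-sequence terminating at $\tau^{-1}(M_j)$ has no projective summand in its middle term: such a summand occurs precisely when $\tau^{-1}(M_j)\cong\Omega^{-1}(T)$ for a simple module $T$, i.e.\ when $M_j\cong\Omega(T)$. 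That $M_{n-1}, M_{n-2},\dots$ are $\Omega$-translates of simple modules is exactly the conclusion you are trying to reach, so it cannot be invoked to power the iteration. You need an independent argument that the module landing on the rim after the descent is actually a simple $S_2$, and that from $S_2$ one recovers (via the standard sequence of $S_2$ and the indecomposability of $\mathcal H(P(S_2))$, which you correctly note) the uniserial structure of $P(S_2)$; the remaining $S_i$ and their projective covers then follow by applying $\tau=\Omega^2$ and using minimality of $n$ to forbid simples on intermediate rows. The Cartan-matrix shape does indeed read off once the uniserial $P(S_i)$ are in hand, but the hard technical content is precisely the step you defer to ``bookkeeping,'' so the proposal is a correct-looking outline with a genuine gap at its pivot.
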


\begin{rem}
\begin{enumerate}
  \item[\rm(a)] If the Cartan matrix of a block has the shape of Theorem~\ref{thm:kawCartan}(b) above with 
$n=2$, then the simple module $S$ corresponding to the second column lies on the 2nd row of its AR-component. Indeed in this case 
  $P(S_2)=\boxed{\begin{smallmatrix}S_2\\ S\\S_2\end{smallmatrix}}$ and the standard sequence associated 
to $S_2$ is
  $$0\lra \Omega(S_2)\lra S\oplus P(S_2)\lra \Omega^{-1}(S_2)\lra0\,,$$
  so that $S_2$ lies at the end of its AR-component and $S$ on the 2nd row of its AR-component.\\
  A converse to Kawata's  Criterion need  not  be true in general for an $n\geq 3$.
  \item[\rm(b)]  The above was used to produce two counter-examples of simple modules not lying at the end 
of their AR-components. Namely, the group $F_4(2)$ for $p=5$ has a simple module in the principal block of 
dimension 875823 lying on the the 2nd row of its AR-component, and the group $2.Ru$ for $p=3$ has a 
faithful simple module also  lying on the 2nd row. See \cite[\S4]{KMU01}. Both counter-examples are obtained 
thanks to the decomposition matrices of these groups computed by G.~Hi\ss.
\end{enumerate}

\end{rem}

We can now improve Kawata's result by describing more accurately the structure of the heart of the projective 
cover of the simple module $S$ lying on the $n$-th row of its AR-component.

\begin{cor}\label{cor:toKawataCriterion}
With the assumptions and the notation of Theorem~\ref{thm:kawCartan}, we have that the heart of the 
projective cover of the simple module $S$ is decomposable and has a uniserial indecomposable summand of 
length $n-1$. More precisely 
\begin{align*}
\mathcal H(P(S)) \ &= \ 
\boxed{\begin{matrix} S_2\\ S_3 \\ \vdots \\ S_n \end{matrix} }
\, \bigoplus \, V\,,
\\
\end{align*}
where $V\,{\not=}\,\{0\}$  is an indecomposable $kG$-module.
\end{cor}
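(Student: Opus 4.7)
My plan is to combine the standard AR-sequence for $\Omega^{-1}(S)$ with explicit homomorphisms between the uniserial projective covers of Kawata's Criterion, so as to exhibit $\bar U := \boxed{\begin{matrix} S_2 \\ \vdots \\ S_n \end{matrix}}$ as a direct summand of $\mathcal{H}(P(S))$, together with a non-zero indecomposable complement $V$.

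First I would invoke the standard AR-sequence
$$0 \longrightarrow \Omega(S) \longrightarrow \mathcal{H}(P(S)) \oplus P(S) \longrightarrow \Omega^{-1}(S) \longrightarrow 0\,.$$
By Erdmann's theorem $\Gamma_s(S)$ has tree class $A_\infty$, and since $S$ lies on the $n$-th row with $n \geq 2$, exactly two arrows terminate at $\Omega^{-1}(S)$ in the stable AR-quiver. Hence the projective-free part of the middle term decomposes as $\mathcal{H}(P(S)) = A \oplus V$ with $A$ on row $n-1$ and $V$ on row $n+1$, both nonzero and indecomposable; this already secures the existence of the required non-zero indecomposable $V$.

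Next I would construct the uniserial submodule $\bar U$. By Theorem~\ref{thm:kawCartan}(b) the Cartan entry $c_{S_2,S} = 1$, giving a non-zero $\phi \colon P(S_2) \to P(S)$ unique up to scalar. As $P(S_2)$ is uniserial, $\mathrm{Im}\,\phi$ is a uniserial quotient of $P(S_2)$ with top $S_2$. Since $\mathrm{Im}\,\phi \subseteq P(S)$, its nonzero socle must lie in $\soc P(S) = S$, and inspection of the Loewy structure of $P(S_2)$ then forces $\mathrm{Im}\,\phi \cong P(S_2)/\rad^n P(S_2)$, uniserial of length $n$ with composition factors $S_2, S_3, \ldots, S_n, S$ from top to socle. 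Reducing modulo $\soc P(S)$ yields
$$\bar U := \mathrm{Im}\,\phi / \soc P(S) \ \cong \ \boxed{\begin{matrix} S_2 \\ \vdots \\ S_n \end{matrix}} \ \subseteq \ \mathcal{H}(P(S))\,,$$
uniserial of length $n-1$ with the desired structure.

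The main obstacle is to prove that $\bar U$ is actually a direct summand of $\mathcal{H}(P(S))$. My plan is to construct an explicit retraction via a dual homomorphism $\rho \colon P(S) \to P(S_n)$, unique up to scalar since $c_{S_n, S} = 1$. A parallel Loewy-layer argument gives $\mathrm{Im}\,\rho = \rad P(S_n)$ (uniserial of length $n$, with top $S$ lying in Loewy layer $2$ of $P(S_n)$), and $\rho(\soc P(S)) = 0$ since $S \neq S_n$. Hence $\rho$ descends to $\bar\rho \colon \mathcal{H}(P(S)) \to P(S_n)$ with image $\rho(\rad P(S)) = \rad^2 P(S_n)$, which is uniserial and isomorphic to $\bar U$. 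The crux is to verify $\bar\rho|_{\bar U}$ is an isomorphism; this reduces to showing $\rho\circ\phi \neq 0$ in the $1$-dimensional $\Hom(P(S_2), P(S_n))$ (using $c_{S_n, S_2} = 1$). A Loewy-layer chase — tracing that $\phi(1_{P(S_2)})$ spans the $S_2$-isotypic component of $L_2(P(S))$, which $\rho$ maps isomorphically onto $L_2(\mathrm{Im}\,\rho) = L_3(P(S_n)) = S_2$ — establishes this. Then $\bar\rho|_{\bar U}$ is a surjection between finite-dimensional modules of the same length, hence an isomorphism, and composing with its inverse gives a retraction $\mathcal{H}(P(S)) \twoheadrightarrow \bar U$. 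By Krull--Schmidt this identifies $\bar U$ with $A$ and gives $\mathcal{H}(P(S)) = \bar U \oplus V$ with $V \neq 0$ indecomposable, as required.
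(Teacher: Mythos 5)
Your proof is correct, and it builds the same two ingredients as the paper: the uniserial module $\bar U$ arising from the Cartan entry $c_{S_2,S}=1$ (equivalently, from the structure of $P(S_2)$), and a surjection $\mathcal H(P(S))\twoheadrightarrow \boxed{\begin{smallmatrix}S_2\\\vdots\\S_n\end{smallmatrix}}$ coming from the structure of $P(S_n)$. The difference is in how the splitting is established. The paper simply sets $V:=\ker\psi$, observes that $\soc(U)\cap\soc(V)=\{0\}$ because $c_{S,S_n}=1$ forces the unique copy of $S_n$ inside $\mathcal H(P(S))$ to sit in $U$ and not in $V$, and then counts composition factors to get $U\oplus V=\mathcal H(P(S))$. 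You instead verify that $\psi$ restricts to an isomorphism on $\bar U$, which you reduce to showing $\rho\circ\phi\neq 0$ in $\Hom(P(S_2),P(S_n))$ via a Loewy-layer chase; this is valid (it uses $c_{S,S_2}=1$ to pin down the $S_2$-isotypic component of $L_2(P(S))$), but is somewhat heavier than the paper's intersection-plus-count argument. On the other hand, you make explicit, via the AR-quiver shape of tree class $A_\infty$, why $V$ is nonzero and indecomposable; the paper only writes ``$V\neq\{0\}$ by assumption'' and leaves indecomposability implicit, so your treatment is more complete on that point. (Both treatments silently assume $\mathcal H(P(S))$ has no projective summand when appealing to the AR sequence; this is a small standing gap common to both arguments.)
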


\begin{proof}
Let $S_2,\ldots, S_n$ be the simple $kG$-modules given by Theorem~\ref{thm:kawCartan} and set $\mathcal{H}:=\mathcal H(P(S))$. 
By the structure of $P(S_2)$ (see Theorem~\ref{thm:kawCartan}(b)), there exists a uniserial 
$kG$-module 
$$ W\,:=\, \boxed{\begin{matrix} S_2\\S_3\\ \vdots\\S_n\\ S \ \,
         \end{matrix} }\,.
$$
We may assume that
$W\subseteq P(S)$ since ${\mathrm{soc}}(W)=S$, and hence $W\subseteq P(S)J$
since $S_2\,{\not\cong}\,S$. This implies that 
$W/S\subseteq P(S)J/S=\mathcal H$. Namely,
there exists a $kG$-submodule $U$ of $\mathcal H$ with
$$
\mathcal H\supseteq U\,=\,
\boxed{\begin{matrix} S_2\\S_3\\ \vdots\\S_n
         \end{matrix} }\,.
$$
On the other hand, by the structure of $P(S_n)$ (see Theorem~\ref{thm:kawCartan}(b))  
there is a $kG$-epimorphism
$$
P(S)\ \twoheadrightarrow 
\boxed{ \begin{matrix}
       S \ \\ S_2 \\ S_3 \\ \vdots \\ S_n
        \end{matrix}        }
\text{\ \ and hence a $kG$-epimorphism }
\psi\, : \, 
\mathcal H\,=\,P(S)J/S\ \twoheadrightarrow 
\boxed{ \begin{matrix}
       S_2 \\ S_3 \\ \vdots \\ S_n
        \end{matrix}        }
$$
since $S_n\,{\not\cong}\,S$.
Set $V:=\ker(\psi)$. Since the entry $(1, n)$ of the Cartan matrix  is equal to one, then by definition of $\psi$,
and by Theorem~\ref{thm:kawCartan}(b) we know that
${\mathrm{soc}}(U)\cap{\mathrm{soc}}(V)=\{0\}$, hence 
$U\cap V=\{0\}$. Thus we have a direct sum
$U\oplus V\subseteq\mathcal H$.
Then, by the definitions of $U$ and $V$ and by counting the number of
composition factors of $\mathcal H$, we obtain
$U\oplus V=\mathcal H$. Clearly $V\,{\not=}\,\{0\}$ by assumption.
\end{proof}

Finally given a normal subgroup $H\trianglelefteq G$ and $\tilde{b}$ a $p$-block of $H$,  we will use the 
group $G[\tilde{b}]$ defined by Dade \cite{Dade73}.
\begin{lem}[Dade]\label{G[b]}
Let $H\trianglelefteq G$ such that 
$p\, {\not|}\,|G/H|$, and let $P$ be a Sylow $p$-subgroup of $H$.
Let $\tilde{b}:=B_0(kH)$ and $B:=B_0(kG)$ be the principal blocks of $kH$ and $kG$,
respectively. Set $N:=H\,C_G(P)$. Then the following hold:
\begin{enumerate}
\item[\rm (a)]  The block $\tilde{b}$ is $G$-invariant. % $G_{\tilde{b}}=G$. %(Recall that $G=H\,N_G(P)$ by 
the Frattini argument, that is ''Sylow's theorem'').
\item[\rm (b)]  $N=G[\tilde{b}]$ and $N\trianglelefteq G$.
\item[\rm (c)] If $b$ denotes the principal block of $kN$, then $1_B=1_b$.  
\end{enumerate}
\end{lem}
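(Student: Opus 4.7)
The plan is to handle the three parts separately, relying on the characterization of the principal block via augmentation, a standard Sylow conjugacy argument, and Dade's theory of block extensions from \cite{Dade73}.

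For part (a), I would use that the principal block $\tilde b$ of $kH$ is characterized as the unique $p$-block of $H$ whose block idempotent maps to $1$ under the augmentation $\epsilon\colon kH \to k$. Since conjugation by any $g \in G$ is a $k$-algebra automorphism of $kH$ commuting with $\epsilon$, it fixes the primitive central idempotent $1_{\tilde b}$, establishing $G$-invariance.

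For the normality statement in part (b), I would argue via Sylow conjugacy: for any $g\in G$ the subgroup $gPg^{-1}$ is again a Sylow $p$-subgroup of the normal subgroup $H$, hence equals $hPh^{-1}$ for some $h\in H$, so that
$$g\,C_G(P)\,g^{-1} \,=\, C_G(gPg^{-1}) \,=\, h\,C_G(P)\,h^{-1} \,\subseteq\, H\,C_G(P) \,=\, N,$$
and therefore $gNg^{-1} \subseteq H\cdot N = N$. The identity $N = G[\tilde b]$ is then Dade's explicit description of $G[\tilde b]$ specialised to the principal block, and I would simply cite it from \cite{Dade73}.

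For part (c), I would combine two ingredients from Dade's theory. First, because $N = G[\tilde b]$, Dade's theorem ensures that $1_{\tilde b}$ is already a primitive central idempotent of $Z(kN)$; having augmentation $1$, it must coincide with the principal block idempotent of $kN$, so $1_{\tilde b} = 1_b$ inside $kN$. Second, Dade's correspondence gives a bijection between blocks of $kG$ covering $\tilde b$ and blocks of $kN$ covering $\tilde b$, under which corresponding blocks share the same block idempotent in $kG$. Since only $b$ covers $\tilde b$ in $kN$ (by the first step), there is a unique block of $kG$ covering $\tilde b$, and by Brauer's Third Main Theorem that block is $B = B_0(kG)$. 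Hence $1_B = 1_{\tilde b} = 1_b$ inside $kG$. The main step requiring careful invocation is Dade's structure result for $G[\tilde b]$ in the principal-block case; the remaining arguments are essentially Sylow theory and the naturality of the augmentation.
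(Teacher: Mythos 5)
Your proofs of parts (a) and (b) are fine, and your normality argument for $N$ via Sylow conjugacy in $H$ is a pleasant elementary alternative to the paper's citation of \cite[Proposition~2.17]{Dade73}.

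Part (c), however, contains a genuine gap. You assert that ``because $N = G[\tilde b]$, Dade's theorem ensures that $1_{\tilde b}$ is already a primitive central idempotent of $Z(kN)$,'' hence $1_{\tilde b}=1_b$, and you then deduce that $B$ is the unique block of $kG$ covering $\tilde b$. Both the primitivity and the uniqueness claims are false in general. Take $G = H\times K$ with $K$ a non-trivial $p'$-group having more than one $k$-block (e.g.\ $p=3$, $H=\fS_3$, $K=\IZ/2\IZ$). Then $C_G(P)\supseteq K$, so $N = HC_G(P) = G = G[\tilde b]$, and in $Z(kN)=Z(kH)\otimes_k Z(kK)$ one has $1_{\tilde b}\otimes 1_{kK} = \sum_{c} 1_{\tilde b}\otimes 1_c$ summed over the blocks $c$ of $kK$; so $1_{\tilde b}$ is not primitive in $Z(kN)$, $1_{\tilde b}\neq 1_b = 1_{\tilde b}\otimes 1_{B_0(kK)}$, and there are several blocks of $kG$ covering $\tilde b$. (The lemma's conclusion $1_B=1_b$ is still true here, trivially, since $N=G$.) What Dade's condition $N=G[\tilde b]$ actually controls is the crossed-product decomposition $C[\tilde b]=Z(\tilde b)\ast G[\tilde b]$ of the subalgebra $1_{\tilde b}C_{kG}(H)$; it does not force $1_{\tilde b}$ to stay primitive upon induction to $kN$. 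The paper's argument instead shows that $1_B$ lies in $Z(kN)$ (using $1_B\in 1_{\tilde b}Z(kG)\subseteq Z(C)$ together with $Z(C)\subseteq C[\tilde b]\subseteq kN$ via \cite[Corollary~4]{Kue90}), and then exploits the primitivity of $1_b$ --- not of $1_{\tilde b}$ --- in $Z(kN)$: since $1_b$ is $G$-invariant one may write $1_b=1_B+\tilde e$ with $\tilde e$ an idempotent of $Z(kN)$ orthogonal to $1_B$, forcing $\tilde e=0$.
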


\begin{proof}

(a) Obvious since $\tilde{b}$ is the principal block.

(b) This follows from \cite[Corollary 12.6]{Dade73}
since $\tilde{b}$ is the principal block  (see \cite[Proof of Lemma~3]{Dade77}).
As $\tilde{b}$ is $G$-invariant, the fact that $G[\tilde{b}]\trianglelefteq G$ follows from 
\cite[Proposition 2.17]{Dade73}.

(c) (see also \cite[p.303 line 10]{Kue95}) As $\tilde{b}$ is $G$-invariant,  $1_{\tilde{b}}$ is an idempotent of 
$Z(kG)$ and we can write 
\begin{equation*}\label{blockIdemp}
1_{\tilde{b}}=1_B  + 1_{B_1} + \cdots  + 1_{B_n}
\end{equation*}
for an integer $n\geq 0$ and for distinct non-principal blocks $B_1, \ldots, B_n$ of $kG$. Thus, $1_{\tilde{b}}\,1_B=1_B$. Namely,
$$
1_B \ \in 1_{\tilde{b}}\,Z(kG)\subseteq 1_{\tilde{b}}\,C_{kG}(H)=:C.
$$
This implies $1_B\in Z(C)$ since $1_B\in Z(kG)$.
Hence it follows from \cite[Corollary 4]{Kue90} and part (b) 
that 
\begin{equation*}
\begin{split}
1_B\in C[\tilde{b}] & =Z(\tilde{b})*G[\tilde{b}]\quad\quad\quad \quad  \text{ (where $*$ denotes the crossed 
product) } \\
                             & \subseteq Z(kH)*N  \subseteq kN\,.
\end{split}
\end{equation*}
%where the notations are as in \cite[p.210]{Kue90}.
Thus $1_B\in Z(kN)$. On the other hand, since $b$ is the principal block of $kN$, 
we have $1_{b}$ is $G$-invariant, so that 
$1_{b}\in Z(kG)$. Hence as above we can write
$$
1_{b}=1_B+1_{B'_1}+\cdots +1_{B'_t}$$
for an integer $t\geq 0$ and for some
distinct non-principal blocks $B'_1, \cdots, B'_t$ of $kG$. 
%(note $\tilde B$ is covered by $B$ because both are the principal blocks).
Set $\tilde e:=1_{b}-1_B\in Z(kN)$
since $1_B\in Z(kN)$.
Therefore $1_{b}=1_B+\tilde e$  is a decomposition of~$1_b$ 
into orthogonal idempotents of $Z(kN)$. This implies that $\tilde e=0$,
and hence $1_{b}=1_B$.
\end{proof}

%--------------------------------------------------------------------------------------------
%------------------------------------------ Section --------------------------------------
%--------------------------------------------------------------------------------------------
\vspace{2mm}
\section{Groups having a solvable quotient of $p'$-order} \label{sec:}

\begin{hyp}\label{hyp1}\label{hypo2}
Assume  that:
\begin{enumerate}
  \item[\rm(a)] $G$ is a finite group of order divisible by $p$ and $N\trianglelefteq G$ is a normal subgroup 
such that $|G/N|=:q$ is a prime number with $q\neq p$,
and we set $G/N=:\langle gN \rangle$ for an element $g\in G\backslash N$. 
  \item[\rm(b)]  $B$ and $b$ are wild blocks of $kG$ and $kN$ respectively such that $1_B=1_b$.
\end{enumerate}
\end{hyp}

\begin{lem}\label{simples} \label{extension}
Assume Hypothesis~\ref{hyp1} holds. Let $\zeta\in k^\times$ be a primitive $q$-th root of unity in $k$,
and for each $1\leq j\leq q$ let $Z_j$ be the one-dimensional $k(G/N)$-module defined by
$Z_j:=\langle\alpha_j\rangle_k$ and 
$\alpha_j{\cdot}gN := \zeta^{j-1}\alpha_j$,
so that in particular $Z_1=k_{G/N}$. The following holds:
\begin{enumerate}
  \item[\rm(a)] If $\mathcal S\in{\mathrm{IBr}}(B)$ is such that
$\mathcal S{\downarrow}_N$ is not simple, then for each $1\leq j\leq q$, 
$$
\mathcal S\otimes_k Z_j \cong \mathcal S
$$
as $kG$-modules, where we see $Z_j$ as a $kG$-module via inflation.
   \item[\rm(b)]   There are integers $m\geq 1$ and $\ell\geq 0$ such that
\begin{align*}
{\mathrm{IBr}}(B)&=
\{ \mathcal S_{ij}\, |\,1\leq i\leq m; 1\leq j\leq q \}\, \bigsqcup\,
\{\mathcal S_i\,| \,m+1\leq i\leq m+\ell\} \text{ and }
%\text{ \rm{(disjoint union)}}
\\
{\mathrm{IBr}}(b)&=
\{\mathcal T_i \,| \,1\leq i\leq m \}\,\bigsqcup\,
\{\mathcal T_{ij}\, |\, m+1\leq i\leq m+\ell, 1\leq j\leq q\}\,,
%\text{ \rm{(disjoint union)}}
\end{align*}
where  for each $1\leq i\leq m$ and each $1\leq j\leq q$,
$$\mathcal S_{ij}{\downarrow}_N =\mathcal T_i\quad\text{ and }
\quad \mathcal T_i{\uparrow}^G = \mathcal S_{i1}\oplus\cdots\oplus\mathcal S_{iq}\,,
$$  and for each $m+1\leq i\leq m+\ell$ and each $1\leq j\leq q$,
$$\mathcal S_i{\downarrow}_N 
= \mathcal T_{i1}\oplus\mathcal T_{i2}\oplus\cdots \oplus\mathcal T_{iq}\quad\text{ and }\quad 
\mathcal T_{ij}{\uparrow}^G = \mathcal S_i$$
where we may assume that
$\mathcal T_{ij} := {\mathcal T_{i1}}^{g^{j-1}}$.\\
Moreover, we can assume that for each $1\leq j\leq q$,
$$ 
\mathcal S_{ij} = \mathcal S_{i1}\otimes_k Z_j\,.
$$
\end{enumerate}
\end{lem}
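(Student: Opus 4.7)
The plan is to apply Clifford theory to the normal inclusion $N\trianglelefteq G$ with $[G:N]=q$ prime and coprime to $p$, exploiting that $k$ is algebraically closed throughout. The hypothesis $1_B=1_b$ will ensure that the simple constituents of restriction and induction between $B$ and $b$ are correctly paired; together with Frobenius reciprocity this will produce the desired enumerations of $\IBr(B)$ and $\IBr(b)$.

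For part~(a), given $\mathcal{S}\in\IBr(B)$, I would first invoke Clifford's theorem: $\mathcal{S}{\downarrow}_N$ is semisimple and its simple summands form a single $G$-orbit. Since $[G:N]=q$ is prime, the orbit has size either $1$ or $q$; by assumption we are in the latter case, so $\mathcal{S}{\downarrow}_N = \mathcal{T}'_1 \oplus \cdots \oplus \mathcal{T}'_q$ with the $\mathcal{T}'_j$ pairwise non-isomorphic, and $\mathcal{S} \cong \mathcal{T}'_1{\uparrow}^G$ by the standard inertia-group argument (inertia equal to $N$). Since $Z_j$ is inflated from $G/N$, we have $Z_j{\downarrow}_N = k_N$, hence $(\mathcal{S}\otimes_k Z_j){\downarrow}_N \cong \mathcal{S}{\downarrow}_N$. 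Tensoring with a one-dimensional $kG$-module preserves simplicity, so $\mathcal{S}\otimes_k Z_j$ is a simple $kG$-module with $\mathcal{T}'_1$ among the constituents of its restriction, forcing $\mathcal{S}\otimes_k Z_j \cong \mathcal{T}'_1{\uparrow}^G \cong \mathcal{S}$.

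For part~(b), I would partition $\IBr(B)$ according to the dichotomy of part~(a). Those $\mathcal{S}$ with simple $N$-restriction come in families of size $q$ parametrised by $G$-invariant $\mathcal{T}_i\in\IBr(b)$; those with non-simple restriction correspond to $G$-orbits of size $q$ in $\IBr(b)$ via $\mathcal{S}_i = \mathcal{T}_{i1}{\uparrow}^G$ with $\mathcal{T}_{ij}=\mathcal{T}_{i1}^{g^{j-1}}$. The nontrivial point is that each $G$-invariant $\mathcal{T}_i$ actually extends to $G$: since $G/N$ is cyclic of order $q$ and $k$ is algebraically closed, any $N$-isomorphism $\varphi\colon \mathcal{T}_i^g \to \mathcal{T}_i$ satisfies $\varphi^q \in k^\times$ (using $\End_{kN}(\mathcal{T}_i)=k$), and rescaling $\varphi$ by a $q$-th root of $\varphi^{-q}$ produces an actual extension. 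The set of extensions is then a free torsor under $\Hom(G/N,k^\times)\cong \IZ/q\IZ$, realised precisely by tensoring with the $Z_j$; a dimension count in $\mathcal{T}_i{\uparrow}^G$ confirms that the $q$ extensions are pairwise non-isomorphic, so setting $\mathcal{S}_{ij}:=\mathcal{S}_{i1}\otimes_k Z_j$ is consistent and Frobenius reciprocity gives $\mathcal{T}_i{\uparrow}^G = \mathcal{S}_{i1}\oplus\cdots\oplus\mathcal{S}_{iq}$. To close, the hypothesis $1_B=1_b$ forces restrictions from $B$ to lie in $b$ and induced modules from $b$ to have all their constituents in $B$, so $\{\mathcal{T}_i\}\sqcup\{\mathcal{T}_{ij}\}$ exhausts $\IBr(b)$.

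The main obstacle is the existence of the extensions in the first family, which is the only step beyond purely formal Clifford-theoretic bookkeeping; this is handled by the algebraic closure of $k$. Once extensions exist, the rest — the action by tensoring with the $Z_j$, the dimension counts, and the matching of the two enumerations through $1_B=1_b$ — follows mechanically.
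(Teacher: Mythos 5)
Your part~(a) matches the paper's argument essentially verbatim: since $Z_j{\downarrow}_N=k_N$, tensoring by $Z_j$ does not change the restriction to $N$, and as $\mathcal S{\downarrow}_N$ is a full orbit of size $q$ one gets $\mathcal S\cong\mathcal T{\uparrow}^G\cong\mathcal S\otimes_k Z_j$.

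For part~(b), the paper does nothing beyond citing the Schur--Clifford result in Nagao--Tsushima (Chap.~3, Cor.~5.9 and Problem~11(i)); you instead unfold that citation into a self-contained Clifford-theoretic argument, which is a legitimate and more elementary route. The overall bookkeeping is correct: the $p'$-index makes induction from $N$ semisimple, the $G$-orbits in $\IBr(b)$ have size $1$ or $q$ since $q$ is prime, a $G$-invariant $\mathcal T_i$ extends to $G$ because $G/N$ is cyclic and $k$ is algebraically closed, the $q$ extensions form a torsor under $\Hom(G/N,k^\times)\cong\IZ/q\IZ$ (here you need $q\neq p$, which Hypothesis~\ref{hyp1} supplies) realised by $-\otimes_k Z_j$, and $1_B=1_b$ pairs the two lists via $1_B=1_b\in Z(kN)\cap Z(kG)$ so that restriction and induction stay within $B$ and $b$. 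One technical imprecision worth fixing: for a $G$-invariant $\mathcal T_i$ with action $\rho$, a $kN$-isomorphism $\varphi\colon\mathcal T_i^{\,g}\to\mathcal T_i$ does not in general satisfy $\varphi^q\in k^\times$, since $g^q\in N$ need not be $1$; rather $\rho(g^q)^{-1}\varphi^q$ commutes with $\rho(N)$ and hence equals $\lambda\cdot\Id$ for some $\lambda\in k^\times$ by Schur's lemma, and one rescales $\varphi$ by a $q$-th root of $\lambda^{-1}$ (not of $\varphi^{-q}$, which is not a scalar). With that correction the extension argument is the standard one and your proof goes through.
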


\begin{proof}
(a) Let $1\leq j\leq q$. By  assumption and Clifford's theory we have that 
\begin{align*}
(\mathcal S\otimes_kZ_j){\downarrow}_N 
&= \mathcal S{\downarrow}_N\otimes_k k_N \cong\mathcal S{\downarrow}_N.
\\
&= \mathcal T\oplus\mathcal T^g\oplus\cdots\oplus 
   \mathcal T^{g^{q-1}}
\end{align*}
for some $\mathcal T\in{\mathrm{IBr}}(b)$.
Hence $\mathcal T{\uparrow}^G\cong\mathcal S$,
and $\mathcal T{\uparrow}^G\cong\mathcal S\otimes_kZ_j$ for each $1\leq j\leq q$.

(b)  As by Hypothesis \ref{hyp1} the quotient $G/N$ is cyclic, the claim follows from the result of Schur-Clifford 
\cite[Chap.~3 Corollary 5.9 and Problem 11(i)]{NagaoTsushima}.
\end{proof}

\begin{lem}\label{projCover}
Assume Hypothesis~\ref{hyp1} holds. Let $\mathcal S\in{\mathrm{IBr}}(B)$.
\begin{enumerate}
\item[\rm (a)]
If $\mathcal S{\downarrow}_N=:\mathcal T$ is simple, then
$P(\mathcal S){\downarrow}_N\cong P(\mathcal T)$
and
$\mathcal H(P(\mathcal S)){\downarrow}_N \cong
 \mathcal H(P(\mathcal T))$.
\item[\rm (b)]
If $\mathcal S{\downarrow}_N$ is not simple, then we can write
$\mathcal S{\downarrow}_N= 
\mathcal T_1\oplus \mathcal T_2\oplus\cdots\oplus
\mathcal T_q$ with 
$\mathcal T_j:={\mathcal T_1}^{g^{j-1}}$ for each $1\leq j\leq q$ and  we have that
$$
P(\mathcal S){\downarrow}_N\cong P(\mathcal T_1)
\oplus\cdots\oplus P(\mathcal T_q)
\quad\text{ and }\quad\mathcal H(P(\mathcal S)){\downarrow}_N\cong \bigoplus_{j=1}^q \mathcal 
H(P(\mathcal T_j)).
$$
\end{enumerate}
\end{lem}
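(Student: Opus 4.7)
The plan is to exploit that since $[G:N]=q$ is coprime to $p$, restriction from $G$ to $N$ preserves projectivity, and by Villamayor's theorem (see Lemma~\ref{radical}) the Jacobson radical satisfies $J(kG)=J(kN)\,kG=kG\,J(kN)$, hence restriction commutes with taking tops, radicals, and socles. Everything will reduce to identifying the top of $P(\mathcal S){\downarrow}_N$.

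I first observe that $P(\mathcal S){\downarrow}_N$ is projective, so it is a direct sum of PIMs of $kN$. Its top equals $(P(\mathcal S)/P(\mathcal S)J(kG)){\downarrow}_N=\mathcal S{\downarrow}_N$ by the Villamayor identity above. In case (a), this top is the simple module $\mathcal T$, and uniqueness of the projective cover forces $P(\mathcal S){\downarrow}_N\cong P(\mathcal T)$. In case (b), Clifford's theorem combined with Lemma~\ref{extension} yields $\mathcal S{\downarrow}_N=\mathcal T_1\oplus\mathcal T_1^g\oplus\cdots\oplus\mathcal T_1^{g^{q-1}}$ with $q$ pairwise non-isomorphic simple summands: indeed the inertia group of $\mathcal T_1$ must be $N$, since otherwise $\mathcal T_1$ would be $G$-stable and, as $G/N$ is cyclic of order coprime to $p$, would extend to $G$, forcing $\mathcal S{\downarrow}_N$ to be simple and contradicting the hypothesis of case (b). Consequently $P(\mathcal S){\downarrow}_N$ is projective with top $\bigoplus_{j=1}^q\mathcal T_j$, whence $P(\mathcal S){\downarrow}_N\cong\bigoplus_{j=1}^q P(\mathcal T_j)$.

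For the statement on the heart, Lemma~\ref{radical} and Lemma~\ref{LoewySocle} allow one to compute
\begin{align*}
\mathcal H(P(\mathcal S)){\downarrow}_N
&=\bigl(P(\mathcal S)\,J(kG)\bigr){\downarrow}_N\,\big/\,\mathrm{soc}(P(\mathcal S)){\downarrow}_N \\
&=P(\mathcal S){\downarrow}_N\cdot J(kN)\,\big/\,\mathrm{soc}\bigl(P(\mathcal S){\downarrow}_N\bigr).
\end{align*}
In case (a) this is $P(\mathcal T)\,J(kN)/\mathrm{soc}(P(\mathcal T))=\mathcal H(P(\mathcal T))$, while in case (b), since radical and socle distribute over direct sums of PIMs, it equals $\bigoplus_{j=1}^q \mathcal H(P(\mathcal T_j))$, as desired.

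I do not anticipate any serious obstacle: once the Villamayor-based lemma is in hand, the argument is pure bookkeeping combining projectivity of restriction with the identification of the top. The only point requiring brief justification is the orbit-size claim in case (b), namely that the $G$-orbit of $\mathcal T_1$ has exact size $q$ with multiplicity one; this I expect to deduce either by direct Clifford theory for cyclic quotients of order coprime to $p$, or, more conveniently, by a direct appeal to Lemma~\ref{extension}.
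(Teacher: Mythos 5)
Your proof is correct and follows essentially the same route as the paper's: both hinge on Villamayor's lemma to identify the top of $P(\mathcal S){\downarrow}_N$ with $\mathcal S{\downarrow}_N$, invoke uniqueness of projective covers, and then compute the heart via compatibility of radical and socle with restriction. The paper's version is terser (it leaves the projectivity of $P(\mathcal S){\downarrow}_N$ implicit and cites Lemma~\ref{simples} for the Clifford-theoretic decomposition in case (b), which you re-derive), but the argument is the same.
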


\begin{proof}(a) Obviously
\begin{align*}
\mathcal T&=\mathcal S{\downarrow}_N
=(P(\mathcal S)/P(\mathcal S)J){\downarrow}_N
  = P(\mathcal S){\downarrow}_N 
    / (P(\mathcal S)J){\downarrow}_N
\\
&=P(\mathcal S){\downarrow}_N / (P(\mathcal S)\,kG\tilde J)
\text{ \ by Lemma~\ref{radical} }
\\
&= P(\mathcal S){\downarrow}_N
/ P(\mathcal S){\downarrow}_N\,\tilde J.
\end{align*}
Hence the top of $P(\mathcal S){\downarrow}_N$ is 
$\mathcal T$, which implies that
$P(\mathcal S){\downarrow}_N\cong P(\mathcal T)$.
Therefore,
$$
\mathcal H(P(\mathcal S)){\downarrow}_N
= (P(\mathcal S)\,J/\mathcal S){\downarrow}_N 
= \mathcal H(P(\mathcal T)).
$$
(b) Similar to (a). 
\end{proof}

\begin{prop}\label{hyp_b}
Assume Hypothesis~\ref{hyp1} holds. If every simple module $ T\in{\mathrm{IBr}}(b)$ lies at the end of its 
AR-component, then every simple 
module $ S\in{\mathrm{IBr}}(B)$ lies at the end of its AR-component.
\end{prop}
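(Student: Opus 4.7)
The plan is to prove that $\mathcal{H}(P(\mathcal S))$ is indecomposable as a $kG$-module for every $\mathcal S\in\IBr(B)$, which is equivalent to the stated conclusion. Fix $\mathcal S$; I would split the argument into two cases according to the dichotomy of Lemma~\ref{projCover}.

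First, suppose $\mathcal S{\downarrow}_N=:\mathcal T$ is simple. By Lemma~\ref{projCover}(a), $\mathcal{H}(P(\mathcal S)){\downarrow}_N\cong\mathcal{H}(P(\mathcal T))$, and by hypothesis the latter is indecomposable as a $kN$-module. Since restriction from $kG$-mod to $kN$-mod preserves direct sum decompositions, any nontrivial $kG$-decomposition of $\mathcal{H}(P(\mathcal S))$ would restrict to a nontrivial one of $\mathcal{H}(P(\mathcal T))$, a contradiction.

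Suppose now $\mathcal S{\downarrow}_N$ is not simple. By Lemma~\ref{simples}(b), $\mathcal S{\downarrow}_N=\mathcal T_1\oplus\cdots\oplus\mathcal T_q$ with $\mathcal T_j=\mathcal T_1^{g^{j-1}}$ pairwise non-isomorphic, and Lemma~\ref{projCover}(b) gives
$$
\mathcal{H}(P(\mathcal S)){\downarrow}_N\ \cong\ \bigoplus_{j=1}^{q}\mathcal{H}(P(\mathcal T_j)),
$$
with each summand indecomposable by hypothesis. Suppose for contradiction that $\mathcal{H}(P(\mathcal S))=U\oplus V$ with $U,V\neq 0$ as $kG$-modules. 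Krull--Schmidt applied to the restriction yields a partition so that, up to isomorphism, $U{\downarrow}_N\cong\bigoplus_{j\in A}\mathcal{H}(P(\mathcal T_j))$ and $V{\downarrow}_N\cong\bigoplus_{j\in B}\mathcal{H}(P(\mathcal T_j))$ (as multi-sets of isomorphism classes, with $A\sqcup B=\{1,\dots,q\}$). Since $U$ is $G$-invariant, its restriction is $g$-invariant as a $kN$-module, i.e., invariant under the $g$-twist operation $[\mathcal{H}(P(\mathcal T_j))]\mapsto[\mathcal{H}(P(\mathcal T_{j+1}))]$ (indices mod $q$). Assuming the isomorphism classes $[\mathcal{H}(P(\mathcal T_j))]$ for $j=1,\dots,q$ are pairwise distinct, the $g$-twist acts transitively on them with orbit of prime size $q$, so the only $g$-invariant sub-multi-sets are the empty one and the full one, contradicting $U,V\neq 0$.

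The main obstacle lies in Case~2 when the above orbit is not transitive, i.e., when some of the $\mathcal{H}(P(\mathcal T_j))$ are pairwise isomorphic as $kN$-modules even though the $\mathcal T_j$ are not. An auxiliary argument is then needed --- for instance via Lemma~\ref{simples}(a), which yields $\mathcal{H}(P(\mathcal S))\otimes_k Z_j\cong\mathcal{H}(P(\mathcal S))$ for each $j$ and thus further constrains any hypothetical summands $U,V$; alternatively, one can compute $\mathrm{End}_{kG}(\mathcal{H}(P(\mathcal S)))$ as the $G/N$-fixed subalgebra of $\mathrm{End}_{kN}(\mathcal{H}(P(\mathcal S)){\downarrow}_N)$, exploiting that $|G/N|=q$ is coprime to $p$, and show this fixed algebra is local --- in order to conclude indecomposability in that degenerate situation as well.
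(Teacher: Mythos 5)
Your Case 1 (where $\mathcal S{\downarrow}_N$ is simple) is exactly the paper's argument and is correct. In Case 2, however, you have correctly identified a genuine gap yourself, and neither of your proposed repairs closes it. If $W:=\mathcal H(P(\mathcal T_1))$ happens to be $g$-stable, i.e.\ $W\cong W^g$ as $kN$-modules, then (since $q=|G/N|$ is a prime different from $p$ and $k$ is algebraically closed) $W$ extends to a $kG$-module $\widetilde W$, and every indecomposable $kG$-module lying over $W$ restricts to $W$ itself. Hence $\mathcal H(P(\mathcal S))$ would decompose as $\bigoplus_{j=1}^{q}(\widetilde W\otimes_k Z_j)$, and this direct sum is permuted (hence fixed up to isomorphism) by $-\otimes_k Z_j$, so Lemma~\ref{simples}(a) imposes no constraint. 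Computing $\End_{kG}(\mathcal H(P(\mathcal S)))$ as a fixed subalgebra likewise gives no contradiction: the $G/N$-fixed points of $\End_{kN}(W^{\oplus q})\cong M_q(\End_{kN}(W))$ are not local in this situation, consistent with the decomposition you are trying to rule out. So the restriction/Clifford-theory argument alone cannot decide the stable case; one needs to know a priori that $\mathcal H(P(\mathcal T_1))$ is \emph{not} $g$-stable, or input some other structural constraint.

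The paper handles Case 2 by a different and substantially heavier route. It assumes for contradiction that $\mathcal S$ lies on the $n$-th row from the rim ($n\geq 2$ minimal) and invokes Kawata's criterion (Theorem~\ref{thm:kawCartan}) together with Corollary~\ref{cor:toKawataCriterion}, which produce simple modules $S_2,\dots,S_n$ with uniserial projective covers and a precise description of $\mathcal H(P(\mathcal S))$. It then splits into three claims according to which of $S_2{\downarrow}_N,\dots,S_n{\downarrow}_N$ are simple. The contradiction in each claim is obtained not from abstract Clifford theory but from concrete structural computations: in Claim~1, counting indecomposable summands of restricted hearts forces $V=0$; in Claim~2, a dimension count combined with Brauer's theorem on heights of irreducible Brauer characters (\cite[Chap.~3, Theorem~6.25]{NagaoTsushima}) forces $b$ to have defect zero; in Claim~3, the uniseriality of the $P(S_i)$ forces $q=1$. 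None of this is recoverable from your restriction argument. You would need to bring in Kawata's criterion (or an equivalent structural constraint on what $\mathcal H(P(\mathcal S))$ can look like when it is decomposable) to make progress in the degenerate sub-case.
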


\begin{proof}
Let $ S\in{\mathrm{IBr}}(B)$ be a simple module.
First assume that $ S{\downarrow}_N=:T\in{\mathrm{IBr}}(b)$ is  simple.  Then by Lemma \ref{projCover}(a)
$$\mathcal H(P(S)){\downarrow}_N\cong\mathcal H(P(T))\,.$$
But by assumption $\mathcal H(P(T))$ is indecomposable, therefore so is $\mathcal H(P(S))$.\par

We assume now for the rest of the proof that $S{\downarrow}_N$ is not simple. If $S$ 
lies at the end of its AR-component, then there is nothing to do. Therefore we now 
also assume that $S$ lies on the $n$-th row from the bottom of $\Gamma_s(S)$ for 
an integer $n\geq 2$, minimal (as in Kawata's Criterion on Cartan matrices). 
By Lemma~\ref{simples}(b), 
$$S{\downarrow}_N = T_{11}\oplus\cdots\oplus T_{1q}\quad \text{ and}\quad {T_{1j}}
{\uparrow}^G=S\quad \text{ for each } \,1\leq j\leq q\,,$$ 
where $T_{1j}:={T_{11}}^{g^{j-1}}$ for $1\leq j\leq q$  are non-isomorphic simple 
modules in ${\mathrm{IBr}}(b)$. We also set $T_1:=T_{11}$. \par 

Let $S_2, \ldots, S_n$ 
be the simple modules given by Theorem~\ref{thm:kawCartan}.\\

\noindent \textsc{Claim 1.} If the modules $S_2{\downarrow}_N,\ldots, S_n{\downarrow}_N$ are all non-simple, then we have a contradiction.\\

\noindent \textit{Proof of Claim 1.}
By assumption and Lemma~\ref{simples}, we can write
$$S_i{\downarrow}_N = T_{i1}\oplus T_{i2}\oplus\cdots\oplus T_{iq}\,.$$
For each $2\leq i\leq n$ we define $T_i\in{\mathrm{IBr}}(b)$ by $T_{ij}={T_i}^{g^{j-1}}$, where $1\leq j\leq q$. 
We claim that 
   \begin{equation*}\label{P(T_i)}
P(T_2)=\boxed{
\begin{matrix} T_2\\ T_3\\ \vdots \\ T_n \\ T_1 \\ T_2
\end{matrix} 
             }\,,
\ 
P(T_3)=\boxed{
\begin{matrix} T_3\\ \vdots \\ T_n \\ T_1 \\ T_2\\ T_3
\end{matrix} 
             }\,, \ 
\cdots, \ 
P(T_n)=\boxed{
\begin{matrix} T_n\\ T_1\\T_2\\ \vdots \\  \ \ T_{n-1} \\ T_n
\end{matrix} 
             }.
  \end{equation*}
Indeed, we know by Theorem~\ref{thm:kawCartan}(b) and Lemma~\ref{projCover}(b) 
that
\begin{align*}
P(T_2)\oplus P(T_2)^g\oplus\cdots\oplus P(T_2)^{g^{q-1}}
&=
P(S_2){\downarrow}_N
\\
&=\boxed{
\begin{matrix} S_2\\ S_3\\ \vdots \\ S_n \\ S \\ S_2
\end{matrix}
}   {\downarrow}_N
=
\boxed{
\begin{matrix}
T_2 & {T_2}^{g}&\cdots\, {T_2}^{g^{q-1}} \\
T_3 & {T_3}^{g}&\cdots\, {T_3}^{g^{q-1}} \\
  &  \cdots       \\
T_n & {T_n}^{g}&\cdots\, {T_n}^{g^{q-1}} \\    
T_1 & {T_1}^{g}&\cdots\, {T_1}^{g^{q-1}} \\
T_2 & {T_2}^{g}&\cdots\, {T_2}^{g^{q-1}} 
\end{matrix}   }\,,
\end{align*}
where the boxes mean  the Loewy and socle series of the $kN$-modules.
Since the left-hand-side is a direct sum of exactly $q$ indecomposable
$kN$-modules that are $\langle g\rangle$-conjugate to each other,
by interchanging the indices of $T_3, \ldots, T_n, T_1$, we may assume that
the PIM $P(T_2)$ has the desired structure.  Then automatically 
the structures of $P(T_3),\ldots, P(T_n)$ are as claimed.\par 
Now, using a similar argument as above, we also obtain 

$$P(T_1)\oplus P(T_1)^g\oplus\cdots\oplus P(T_1)^{g^{q-1}}
= P(S){\downarrow}_N =
\boxed{ \begin{matrix}
    \boxed{T_1 \oplus {T_{1}}^{g}\oplus\cdots\oplus  {T_{1}}^{g^{q-1}} }
                \\  
                \\
     \boxed{
         \boxed{ \begin{matrix}  
                        S_2 \\
                        S_3 \\
                        \vdots \\
                        S_n 
                  \end{matrix}  }{\downarrow}_N
 \ \bigoplus \
V{\downarrow}_N
         }          
\\   
\\
\ \boxed{
       T_1\oplus {T_1}^{g}\oplus\cdots\oplus {T_1}^{g^{q-1}} 
        } 
\end{matrix}
}\,,
$$
where the last equality holds by Corollary~\ref{cor:toKawataCriterion}. Hence we have
\begin{align*}
\mathcal H(P(T_1))\oplus \mathcal H(P(T_1))^g\oplus\cdots\oplus
\mathcal H(P(T_1))^{g^{q-1}} &= 
\boxed{
\begin{matrix}
T_2\oplus {T_2}^g\oplus\cdots\oplus{T_2}^{g^{q-1}} \\
T_3\oplus {T_3}^g\oplus\cdots\oplus{T_3}^{g^{q-1}} \\
\vdots                                                   \\
T_n\oplus {T_n}^g\oplus\cdots\oplus{T_n}^{g^{q-1}}    
\end{matrix}
             } \ \oplus \ V{\downarrow}_N 
\\ 
&= 
\begin{pmatrix}
\boxed{
\begin{matrix} T_2\\T_3\\ \vdots\\ T_n\end{matrix} }
\oplus
\boxed{
\begin{matrix} T_2\\T_3\\ \vdots\\ T_n\end{matrix} }^g 
\oplus \cdots \oplus
 \boxed{
\begin{matrix} T_2\\T_3\\ \vdots\\ T_n\end{matrix} }^{g^{q-1}}
\end{pmatrix}
\ \oplus \ V{\downarrow}_N 
\end{align*}
since $P(T_2),\ldots,P(T_n)$ are uniserial by the above.

But we are assuming that $T_2,\ldots,T_n$ lie at the end of their AR-components, so that 
$\mathcal H(P(T_2)), \ldots, \mathcal H(P(T_n))$ are indecomposable. Therefore the right-hand side term in 
the later equation has exactly $q$
indecomposable direct summands. This implies that $V=\{0\}$, hence a contradiction.\\

%%%%%%%

\noindent \textsc{Claim 2.} If the modules $S_2{\downarrow}_N, \ldots, S_n{\downarrow}_N$ are all simple, 
then we have a contradiction.\\

\noindent \textit{Proof of Claim 2.}
Set $T_i:=S_i{\downarrow}_N$ for $2\leq i\leq n$. We have 
$$
S{\downarrow}_N=T_1\oplus {T_1}^g\oplus\cdots\oplus {T_1}^{g^{q-1}}.
$$

By the assumption and Lemma \ref{simples}, for each $2\leq i\leq n$ we can write 
${T_i}\!\uparrow^G=S_{i1}\oplus \cdots\oplus S_{iq}$
with $S_{ij}:=S_{i1}\otimes_kZ_j$ for $1\leq j\leq q$. In particular $S_{i1}=S_i$ for each $2\leq i\leq n$. By 
Theorem~\ref{thm:kawCartan}(b)
$$
P(S_2) = 
\boxed{
\begin{matrix} S_2\\ S_3\\ \vdots \\ S_n\\ S \ \, \\S_2
\end{matrix}
          }\,,
$$ 
so Lemma \ref{extension}(a) implies that  
$$
P(S_{2j})=P(S_2)\otimes_k Z_j 
=\boxed{
\begin{matrix} S_{2 j}\\ S_{3 j}\\ \vdots \\ S_{n j}\\ S \ \ \\S_{2 j}
\end{matrix}   } 
\text{ \ \ for } 1\leq j\leq q.
$$
%Note that the composition factor $S$ is fixed by Lemma~\ref{simples}(a).
This yields that there exists a $kG$-module
\begin{equation*}
\boxed{
%%%
\begin{matrix}
\boxed{
\begin{matrix} S_2\\ S_3 \\ \vdots \\ S_n
\end{matrix}
}
\oplus
\boxed{
\begin{matrix} S_{2 2}\\ S_{3 2} \\ \vdots \\ S_{n 2}
\end{matrix}
}
\oplus\cdots\oplus
\boxed{
\begin{matrix} S_{2 q}\\ S_{3 q} \\ \vdots \\ S_{n q}
\end{matrix}
}
\\
\\
S
\end{matrix}
}=:W
\end{equation*}
with  simple socle isomorphic to $S$.
Therefore $W/S$ has a proper uniserial submodule
$$U:=\boxed{
\begin{matrix} S_2\\ S_3 \\ \vdots \\ S_n
\end{matrix}
}\,.$$
Now by Corollary~\ref{cor:toKawataCriterion},  $U|\mathcal H(P(S))$, so that by Lemma~\ref{extension}(a)
\begin{align*}
\boxed{
\begin{matrix} S_{2 j}\\ S_{3 j} \\ \vdots \\ S_{n j}
\end{matrix}
}
\ = \ 
\boxed{
\begin{matrix} S_2\\ S_3 \\ \vdots \\ S_n
\end{matrix}
      }  \otimes_k Z_j
\ &= \ 
(U\otimes_k Z_j)\ {\Big|}\ (\mathcal H(P(S))\otimes_k Z_j)
\cong\mathcal H(P(S\otimes_k Z_j))
\cong\mathcal H(P(S)) 
\end{align*}
for each $1\leq j\leq q$. Therefore $q=2$ since $\mathcal H(P(S))$ has exactly two
non-projective indecomposable direct summands  by the assumption that $S$  does not lie at the end of its 
AR-component.
Notice that this already provides a contradiction in case the characteristic of $k$ is $2$, since we assume 
$q\neq p$. So we now assume that $p\geq 3$.  Then, the Loewy and socle structures of PIMs $P(S)$, 
$P(S_i)$ and $P(S_{i2})$ for $2\leq i\leq n$  are:
$$
\boxed{
\begin{matrix}
S
\\
   \boxed{ \begin{matrix}S_2\\ S_3\\ \vdots\\ S_n \end{matrix} }
   \ \ 
   \boxed{ \begin{matrix}S_{2 2}\\ S_{3 2}\\ \vdots\\ S_{n 2} \end{matrix} }
\\
S   
\end{matrix}  }
, \ \   
\boxed{ \begin{matrix}S_2\\S_3\\ \vdots \\ S_n\\ S\\ S_2  \end{matrix}  }
, \ \
\boxed{ \begin{matrix}S_{2 2}\\S_{3 2}\\ \vdots \\ S_{n 2}\\ S
                          \\ S_{2 2}  \end{matrix}  }
                       , \ \ 
 \boxed{ \begin{matrix}S_3\\ \vdots \\ S_n \\ S\\ S_2\\ S_3  \end{matrix}  }
, \ \ 
\boxed{ \begin{matrix}S_{3 2}\\ \vdots \\ S_{n 2}\\ S \\ S_{2 2}\\S_{3 2}
            \end{matrix}  }
                       ,   \ \  
                          \cdots
                        , \   \
\boxed{ \begin{matrix}S_n\\S\\ S_2\\ \vdots \\ \ \ S_{n-1} \\ S_n  \end{matrix}  }
     , \ \
\boxed{ \begin{matrix}S_{n 2}\\S \ \\ S_{2 2}\\ \vdots \\ \ \ S_{n-1, 2} \\ S_{n,2}       
             \end{matrix}  }.
$$
Now considering the restrictions $S{\downarrow}_N$ and $S_i{\downarrow}_N$ for $2\leq i\leq n$, we obtain 
by Lemma~\ref{projCover} that  the Loewy and socle structures of the PIMs  $P(T_1)$, $P(T_1^g)$ and 
$P(T_i)$ for each $2~\leq~i~\leq~n$ are
$$
\boxed{\begin{matrix}
T_1\\T_2\\T_3 \\ \vdots\\ T_n\\ T_1
\end{matrix} }    ,  \ \ 
\boxed{\begin{matrix}
{T_1}^g\\T_2\\ T_3\\ \vdots\\ T_n\\ {T_1}^g
\end{matrix} }    ,  \ \ 
\boxed{\begin{matrix}
T_2\\T_3\\ \vdots\\ T_n\\ T_1\oplus{T_1}^g \\ T_2
\end{matrix} }    ,  \ \ 
\boxed{\begin{matrix}
T_3\\ \vdots\\ T_n\\ T_1\oplus{T_1}^g \\ T_2 \\ T_3
\end{matrix} }    ,  \ \  \cdots , \ \ 
\boxed{\begin{matrix}
T_n\\ T_1\oplus{T_1}^g \\ T_2\\ T_3 \\ \vdots \\ T_n
\end{matrix} }
$$
since $T_1 \,{\not\cong}\,{T_1}^g$.
Now, as the dimension of any PIM for $kN$ is divisible by $|N|_p=:p^a$ for an integer $a\geq 1$,
and since $\dim\,T_1=\dim\,{T_1}^g$, we  have for each $2\leq i\leq n$
\begin{align*}
0 &\equiv \dim\, P(T_i)-\dim\,P(T_{1})=\dim\,T_i \text{ (mod }p^a)\,,
\end{align*}

so that
$$
0\equiv \dim\,P(T_1)
 \equiv  \dim\,P(T_1)-(\dim\,T_2+\dim\,T_3+\cdots +\dim\,T_n) 
 = 2\cdot\dim\,T_1 \text{ (mod }p^a).
 $$
This implies that
$$\dim\,T_1 \equiv 0   \text{ (mod }p^a)$$
since $p\,{\not=}\,2$ (since $q=2$). 
Thus, $\dim\,T_i\equiv 0$ (mod $p^a$) for any $1\leq i\leq n$.
Now, looking at the composition factors of PIMs 
$P(T_1), P({T_1}^g), P(T_2),\ldots,P(T_n)$,
we know that
${\mathrm{IBr}}(b)=\{ T_1, {T_1}^g, T_2, \ldots, T_n\}$, which implies that
$p^a\,|\,\dim\, \mathcal T$ for any $\mathcal T\in{\mathrm{IBr}}(b)$.
Now it follows from Brauer's result
\cite[Chap.3, Theorem 6.25]{NagaoTsushima} that there is a simple 
$\mathcal T\in{\mathrm{IBr}}(b)$ such that 
$\nu_p(\dim\,\mathcal T)=a-d(b)$ (where $d(b)$ is the defect of $b$). Hence we have a contradiction
since $b$ is a wild block, i.e. of positive defect.\\

\noindent \textsc{Claim 3.}  \begin{enumerate}
    \item[\rm (a)]
If there is an integer $2\leq m\leq n-1$ such that $S_2{\downarrow}_N, \ldots, S_m{\downarrow}_N$ are not 
simple and
$S_{m+1}{\downarrow}_N$ is simple, then we have a contradiction.    
    \item[\rm (b)]
If there is an integer $2\leq m\leq n-1$ such that
$S_2{\downarrow}_N, \ldots, S_m{\downarrow}_N$ are simple and
$S_{m+1}{\downarrow}_N$ is not simple, then we have a contradiction.  
\end{enumerate}
\medskip 

\noindent \textit{Proof of Claim 3.}
(a) Set $T_{m+1}:=S_{m+1}{\downarrow}_N$.
By Lemma~\ref{simples} there exists a simple module
$T_m\in{\mathrm{IBr}}(b)$ with
$S_m{\downarrow}_N=T_m\oplus{T_m}^g\oplus\cdots\oplus{T_m}^{g^{q-1}}$. Then, by 
Lemma~\ref{simples},
$$T_{m+1}{\uparrow}^G =
   S_{m+1}\oplus S_{m+1,2}\oplus\cdots\oplus S_{m+1,q}$$
 where $S_{m+1, j}:=S_{m+1}\otimes_k Z_{j}$ for each $1\leq j\leq q$ and $T_m{\uparrow}^G = S_m$. 
 By the structure of $P(S)$, we have that $\Ext^1_{kG}(S_m,S_{m+1})\neq 0$. Therefore by Eckmann-
Shapiro's Lemma we have that $\Ext^1_{kN}(T_m,T_{m+1})\neq 0$. Thus there exists a $kN$-module with 
Loewy structure 
$$
\boxed{  \begin{matrix}T_m\\ T_{m+1} \end{matrix} }\,.$$
So it follows from Lemma~\ref{radical} that
$$
\boxed{  \begin{matrix}T_m\\ T_{m+1} \end{matrix} }{\uparrow}^G
\ = \ 
\boxed{ \begin{matrix} S_m \\ 
  S_{m+1}\oplus S_{m+1,2}\oplus\cdots\oplus S_{m+1, q}
            \end{matrix}  }
$$           
where the right-hand side box 
is the Loewy and socle series. But $P(S_m)$ is uniserial by 
Theorem~\ref{thm:kawCartan}(b), so applying again Lemma~\ref{radical}, we must have $q=1$, which 
contradicts the assumption that $q$ is prime.\\
(b) follows in a similar  fashion using a dual argument.\\

Altogether, Claims 1-3 prove that the simple modules $S_2,\ldots, S_n$ cannot exist, therefore $S$ must lie 
at the end of its AR-component. 
\end{proof}

As a consequence of the above discussion we obtain Theorem~\ref{thm:solvablequotient} of the Introduction.

\begin{proof}[Proof of Theorem~\ref{thm:solvablequotient}]
Because $G/N$ is solvable of order prime to $p$, it follows by induction on $|G/N|$, that we may assume that 
$|G/N|$ is a prime distinct from $p$. Then Proposition~\ref{hyp_b} yields the result.
\end{proof}

\section{The principal block of $O^{p'}(G)$} \label{sec:}

From now on, we assume that $p\geq 3$ and $G$ is a finite group with non-cyclic abelian Sylow $p$-subgroups. Because we consider the principal block only, we assume that ${O_{p'}(G)=1}$. The structure 
of~$O^{p'}(G)$ is given by the following well-known result of Fong-Harris \cite{FH}.

\begin{lem}[{}{\cite[5A--5C]{FH}}]\label{lem:FH}
Let $p$ be an odd prime. Let $G$ be a finite group with a non-trivial abelian Sylow $p$-subgroup. Then
$$O^{p'}(G/O_{p'}(G))\cong Q \times H_1\times \cdots \times H_m\,,$$
where  $m$ is a non-negative integer (i.e. possibly $O^{p'}(G/O_{p'}(G))\cong Q$), $Q$ is an abelian $p$-group, and for each $1\leq i\leq m$, $H_i$ is a non-abelian simple group with non-trivial Sylow $p$-subgroups.
\end{lem}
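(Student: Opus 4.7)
My plan is to analyse the generalised Fitting subgroup of $\bar G := G/O_{p'}(G)$ and exploit the hypotheses to force $O^{p'}(\bar G)$ to coincide with it. By construction $O_{p'}(\bar G)=1$ and $\bar G$ inherits the abelian Sylow $p$-subgroup property. The generalised Fitting subgroup $F^*(\bar G)=O_p(\bar G)\,E(\bar G)$ is the central product of the abelian $p$-group $O_p(\bar G)$ with the components $L_1,\ldots,L_m$ of $\bar G$, and the standard identity $C_{\bar G}(F^*(\bar G))=Z(F^*(\bar G))$ will be used to assemble the final product. Since the $p'$-part of each $Z(L_i)$ lies in $O_{p'}(\bar G)=1$, each centre $Z(L_i)$ is a $p$-group.

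The first key step is to show $Z(L_i)=1$ for each $i$. Under the hypothesis that $p$ is odd and that the Sylow $p$-subgroups of each $L_i$ are abelian (being contained in abelian Sylows of $\bar G$), a case-by-case inspection using the classification of finite simple groups yields that the $p$-part of the Schur multiplier of the non-abelian simple quotient $L_i/Z(L_i)$ is trivial. Hence each $L_i$ is itself non-abelian simple; I set $H_i:=L_i$, and obtain $F^*(\bar G)=Q\times H_1\times\cdots\times H_m$ with $Q:=O_p(\bar G)$ abelian and $p\mid|H_i|$ for each $i$.

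The second key step is to show that $\bar G/F^*(\bar G)$ is a $p'$-group, equivalently that every Sylow $p$-subgroup $P$ of $\bar G$ lies in $F^*(\bar G)$. I would first rule out non-trivial permutations of the $L_i$ by $P$: if some $\sigma\in P$ mapped $L_i$ to $\sigma L_i\neq L_i$, then for any $y\in L_i\cap P$ the identity $[y,\sigma]=1$ (since $P$ is abelian) would force $y\in L_i\cap\sigma L_i=1$ (using $Z(L_i)=1$), contradicting $p\mid|L_i|$. Hence $P$ normalises each $H_i$ and its action on $H_i$ centralises the Sylow $p$-subgroup $P\cap H_i$. A second appeal to the classification, via the structure of $\mathrm{Out}(H_i)$ for the groups in question, shows that such an automorphism is inner, so that $\sigma\in H_i\cdot C_{\bar G}(H_i)$ for every $i$; successively absorbing the $H_i$-parts and using $C_{\bar G}(F^*(\bar G))=Z(F^*(\bar G))=Q$ then places $\sigma$ in $F^*(\bar G)$.

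Combining the two steps, $F^*(\bar G)\subseteq O^{p'}(\bar G)$ (each $H_i$ is perfect with $p\mid|H_i|$, and $Q$ is a $p$-group), while the previous paragraph gives $O^{p'}(\bar G)\subseteq F^*(\bar G)$, so that $O^{p'}(\bar G)=F^*(\bar G)$ has the claimed form. The main obstacle is clearly the classification-dependent verification in the two inspections above, and this is precisely the content of \cite[5A--5C]{FH}, which a shorter treatment would simply quote.
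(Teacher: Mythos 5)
The paper offers no proof of this lemma; it is quoted directly from Fong--Harris \cite[5A--5C]{FH}, so there is no internal argument to compare against. Your reconstruction through the generalised Fitting subgroup is the natural route, and you are right that the step forcing $\bar G/F^*(\bar G)$ to be a $p'$-group --- by analysing $p$-element outer automorphisms of $H_i$ centralising an abelian Sylow $p$-subgroup --- is where the classification enters essentially; that is the genuine content of \cite[5A--5C]{FH}.

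Your first step, however, contains a concrete error. The claim that a non-abelian finite simple group with abelian Sylow $p$-subgroups (for $p$ odd) must have Schur multiplier of order prime to $p$ is false: for $p=3$ the O'Nan group $O'N$ has abelian Sylow $3$-subgroups of order $3^4$ --- it even appears in this paper's Proposition~\ref{prop:simpleabdef}(i) --- yet $M(O'N)\cong\mathbb{Z}/3$. (What is true is that $3.O'N$ has non-abelian Sylow $3$-subgroups and so cannot arise as a component here, but that is not what you asserted, and it would again require a CFSG inspection.) Fortunately the conclusion you actually need, $Z(L_i)=1$, has an elementary, classification-free proof. You have already shown $Z(L_i)$ is a $p$-group, hence $Z(L_i)=O_p(L_i)\leq P_i$ for $P_i\in\mathrm{Syl}_p(L_i)$. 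Since $L_i$ is perfect and $P_i$ is abelian, the focal subgroup theorem together with Burnside's fusion control gives $P_i=P_i\cap L_i'=[P_i,N_{L_i}(P_i)]$. As $N_{L_i}(P_i)/C_{L_i}(P_i)$ is a $p'$-group acting coprimely on the abelian $p$-group $P_i$, one obtains $P_i=[P_i,N_{L_i}(P_i)]\times C_{P_i}(N_{L_i}(P_i))$, forcing $C_{P_i}(N_{L_i}(P_i))=1$; but $Z(L_i)\subseteq C_{P_i}(N_{L_i}(P_i))$, so $Z(L_i)=1$. With this repair your outline is sound, modulo the CFSG verification in the second step.
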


Therefore, we fix the notation $O^{p'}(G)=Q \times H_1\times \cdots \times H_m$, where $Q$ is an abelian  
$p$-group, and $H_1,\ldots, H_m$ are non-abelian simple groups with non-trivial Sylow $p$-subgroups as 
given by the lemma.

\vspace{2mm}
\subsection{Simple modules in infinite tubes $\IZ A_{\infty}/\langle \tau^a\rangle$}

\begin{lem}[{}{\cite[Lemma 5.2]{KMU00} generalised version}]\label{lem:periodicProduct}
Let $H=\tilde{H}_1\times \cdots \times \tilde{H}_m$ $(m\geq 1)$ be a finite group such that $p\mid |\tilde{H}_i|$ for each $1\leq i\leq m$. If $B_0(kH)$ is a wild block and contains a periodic simple module, then $m=1$.
\end{lem}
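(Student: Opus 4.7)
The plan is to use the complexity of modules and its behavior under outer tensor products over direct products of groups. Since $k$ is algebraically closed, every simple $kH$-module is an outer tensor product $S = S_1 \boxtimes \cdots \boxtimes S_m$, with each $S_i$ a simple $k\tilde{H}_i$-module, and the principal block decomposes as $B_0(kH) = B_0(k\tilde{H}_1) \otimes_k \cdots \otimes_k B_0(k\tilde{H}_m)$. Hence $S \in \IBr(B_0(kH))$ forces $S_i \in \IBr(B_0(k\tilde{H}_i))$ for every $i$. Because $p \mid |\tilde{H}_i|$, the defect group of $B_0(k\tilde{H}_i)$ is a non-trivial Sylow $p$-subgroup of $\tilde{H}_i$; consequently every simple module in $B_0(k\tilde{H}_i)$ is non-projective and thus has complexity at least $1$.

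The next step is to invoke the standard additivity of complexity for outer tensor products over a direct product of groups,
\[
\mathrm{cx}_H(S_1 \boxtimes \cdots \boxtimes S_m) \;=\; \sum_{i=1}^m \mathrm{cx}_{\tilde{H}_i}(S_i),
\]
which is a consequence of the K\"unneth identification of the cohomological support variety of an outer tensor product with the product of the support varieties of the factors. Combined with the previous paragraph, this shows that every simple $S \in \IBr(B_0(kH))$ satisfies $\mathrm{cx}_H(S) \geq m$.

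Finally, I would use the classical fact that a non-projective $kH$-module is periodic if and only if it has complexity exactly $1$. If we had $m \geq 2$, the inequality above would give $\mathrm{cx}_H(S) \geq 2$ for every simple $S$ in $B_0(kH)$, so no such module could be periodic, contradicting the hypothesis. Therefore $m = 1$.

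The only non-routine input is the additivity of complexity under outer tensor products, which is a well-known consequence of support variety theory but is the main fact to be cited carefully; everything else is essentially bookkeeping. Note that the wildness hypothesis on $B_0(kH)$ is not actually needed in this argument and is stated only to situate the lemma within the wild-block framework of the rest of the paper.
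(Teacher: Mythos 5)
Your argument is correct and does prove the lemma, but it takes a different route from the paper. The paper's proof is a three-line argument: writing $S = S_1 \otimes_k \cdots \otimes_k S_m$, it cites \cite[Lemma 2.2]{KMU00} (iterated), which says that periodicity of the outer tensor product forces exactly one factor $S_{i_0}$ to be periodic and all others to be projective; since a block of positive defect cannot contain a simple projective module and $p \mid |\tilde H_j|$ for all $j$, this immediately forces $m=1$. Your proof instead invokes the support-variety machinery directly: the K\"unneth identification $V_H(S_1 \boxtimes \cdots \boxtimes S_m) \cong V_{\tilde H_1}(S_1) \times \cdots \times V_{\tilde H_m}(S_m)$ gives additivity of complexity, each factor is non-projective (complexity $\geq 1$) because the $B_0(k\tilde H_i)$ have positive defect, and periodicity is equivalent to complexity one. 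This is the underlying mechanism behind the cited lemma of Kawata--Michler--Uno, so the two proofs are morally the same; the paper's version is shorter because it outsources the complexity argument to a black-box reference, whereas yours is more self-contained at the cost of importing Carlson's support-variety theory (in particular the non-trivial theorem that complexity one implies periodicity). You are also right that the wildness hypothesis plays no role in either proof; it is carried along only because all blocks in this part of the paper are assumed wild.
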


\begin{proof}
Let $S$ be a simple periodic $B_0(kH)$-module. Then  we may write $S=S_1\otimes_k \cdots \otimes_k\, S_m$ where $S_i$ is a simple $B_0(k\tilde{H}_i)$-module for each $1\leq i\leq m$. Then, by iterating 
\cite[Lemma 2.2]{KMU00}, there exists an index $1\leq i_o\leq m$ such that $S_{i_0}$ is periodic and $S_j$ 
is a projective $k\tilde{H}_j$-module for each $1\leq j\neq i_o\leq m$. But $B_0(k\tilde{H}_j)$ cannot contain a 
simple projective module,  since we assume that $p\mid |\tilde{H}_i|$ for each $1\leq i\leq m$. Hence this 
forces $H=\tilde{H}_{i_0}$, i.e. $m=1$.
\end{proof}

As a consequence, the existence of simple periodic modules in the principal block  lying in tubes drastically 
restricts  the possible structure of $O^{p'}(G)$.

\begin{cor}\label{cor:tubesO^p'}
If $B_0(kG)$ contains a periodic simple module, then $O^{p'}(G)=H_1$ is a non-abelian finite simple group 
with non-cyclic abelian Sylow $p$-subgroups.
\end{cor}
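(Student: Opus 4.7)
The plan is to restrict the given periodic simple to $N:=O^{p'}(G)$ and to exploit the Fong--Harris decomposition via Lemma~\ref{lem:periodicProduct}. Since $N \trianglelefteq G$ with $|G:N|$ coprime to $p$, the groups $N$ and $G$ share a common Sylow $p$-subgroup $P$. By assumption $P$ is non-cyclic abelian, and for $p\ge 3$ this forces $B_0(kN)$ to be of wild representation type (a block with non-cyclic defect is automatically wild in odd characteristic). Moreover $B_0(kN)$ is $G$-stable and $B_0(kG)$ covers $B_0(kN)$, so by Clifford's theorem the restriction $S{\downarrow}_N$ of a periodic simple $B_0(kG)$-module $S$ decomposes as $S{\downarrow}_N = T_1\oplus\cdots\oplus T_r$, with the $T_i$ pairwise $G$-conjugate simple $B_0(kN)$-modules.

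Next I would verify that each $T_i$ is periodic in $\Gamma_s(kN)$. Because $|G:N|$ is coprime to $p$, restriction takes projective $kG$-modules to projective $kN$-modules, and hence $(\Omega_G^n S){\downarrow}_N \cong \Omega_N^n(S{\downarrow}_N)\oplus(\mathrm{proj})$ for every $n\ge 0$. Since no simple $B_0(kN)$-module is projective (the defect group is non-trivial), the isomorphism $\Omega_G^n(S)\cong S$ forces the projective summand on the right to vanish, and Krull--Schmidt then shows that $\Omega_N^n$ permutes $\{T_1,\dots,T_r\}$. In particular every $T_i$ is periodic.

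Finally, by Lemma~\ref{lem:FH} we have $N = Q\times H_1\times\cdots\times H_m$. If $N$ is a direct product of at least two factors of order divisible by $p$ --- that is, if either $Q\ne 1$ and $m\ge 1$, or $Q=1$ and $m\ge 2$ --- then Lemma~\ref{lem:periodicProduct} applied to $B_0(kN)$ and the periodic simple $T_1$ yields a contradiction. The only remaining alternative to $Q=1,\,m=1$ is $Q\ne 1$ and $m=0$, in which case $N=Q$ is a non-cyclic abelian $p$-group and the unique simple $B_0(kN)$-module $k_Q$ cannot be periodic (the trivial module of a $p$-group is periodic iff the group is cyclic). Thus $Q=1$ and $m=1$, i.e.\ $O^{p'}(G)=H_1$, whose Sylow $p$-subgroup $P$ is non-cyclic abelian as required.

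The main obstacle I anticipate is the careful passage of periodicity from $S$ to its Clifford constituents $T_i$, which relies on restriction commuting with the Heller operator up to projective summands (because $|G:N|$ is coprime to $p$) combined with the absence of projective simples in $B_0(kN)$. Everything else is formal: the Fong--Harris structure theorem together with Lemma~\ref{lem:periodicProduct} and a direct observation in the degenerate case $N=Q$ handle every remaining configuration.
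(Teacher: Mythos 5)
Your proof is correct and follows essentially the same route as the paper: restrict the periodic simple $B_0(kG)$-module to $O^{p'}(G)$, observe that the summands are simple periodic $B_0(kO^{p'}(G))$-modules, invoke Lemma~\ref{lem:periodicProduct} to reduce the Fong--Harris decomposition to a single factor, and exclude $O^{p'}(G)=Q$ because the trivial module of a non-cyclic $p$-group is not periodic. You spell out in more detail than the paper why periodicity passes to the Clifford constituents (via $\Omega$ commuting with restriction up to projectives, and the absence of projective simples in a block of positive defect), but the underlying argument is the same.
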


\begin{proof}
By Lemma~\ref{lem:periodicProduct}, either $O^{p'}(G)=Q$ or $O^{p'}(G)=H_1$. But the former cannot 
happen. Indeed, the indecomposable direct summands of the restriction to  $O^{p'}(G)$ of a simple periodic 
$kG$-module are all simple periodic modules, however the unique simple $kQ$-module is the trivial module, 
which is not periodic since we assume that $B_0(kG)$ is wild, and hence $Q$ is non-cyclic. The leaves only 
the possibility $O^{p'}(G)=H_1$, and the $p$-rank of $H_1$ must be at least $2$ again because we assume 
that $B_0(kG)$ is wild.
\end{proof}

This immediately leads to the following reduction to non-abelian simple groups:

\begin{cor}\label{cor:periodic}
Assume that every  periodic simple $B_0(kH)$-module lies at the end of its AR-component  for every non-abelian finite simple group $H$ with non-cyclic abelian Sylow $p$-subgroups. Then every simple periodic 
$B_0(kO^{p'}(G))$-module  lies at the end of its AR-component  for any finite group $G$ with $O_{p'}(G)=1$ 
and non-cyclic abelian Sylow $p$-subgroups.
\end{cor}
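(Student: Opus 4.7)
The plan is to reduce the statement to the hypothesis by applying Corollary~\ref{cor:tubesO^p'} not to $G$, but to the subgroup $\tilde G := O^{p'}(G)$ itself. The point is that, in the presence of a periodic simple module in $B_0(k\tilde G)$, the corollary already forces $\tilde G$ to be a non-abelian finite simple group with non-cyclic abelian Sylow $p$-subgroups, at which stage the hypothesis of the statement to be proved applies verbatim.

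First I would verify that $\tilde G$ satisfies the standing assumptions needed to invoke Corollary~\ref{cor:tubesO^p'}. Namely: since $[G:\tilde G]$ is a $p'$-number, the Sylow $p$-subgroups of $\tilde G$ coincide with those of $G$ and are thus non-cyclic abelian (so in particular $B_0(k\tilde G)$ is wild by the usual classification of abelian defect group blocks for $p$ odd); since $O^{p'}(G)$ is characteristic in $G$, the subgroup $O_{p'}(\tilde G)$ is characteristic in $G$ and contained in $O_{p'}(G)=1$, hence trivial; and $O^{p'}$ is idempotent, so $O^{p'}(\tilde G) = \tilde G$.

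Next, if $B_0(k\tilde G)$ contains no periodic simple module, there is nothing to prove. Otherwise, Corollary~\ref{cor:tubesO^p'} applied to $\tilde G$ yields $O^{p'}(\tilde G) = H_1$, a non-abelian finite simple group with non-cyclic abelian Sylow $p$-subgroups. Combined with $O^{p'}(\tilde G) = \tilde G$, this means $\tilde G$ itself is such a non-abelian simple group $H$. The hypothesis of the corollary then directly gives that every periodic simple $B_0(k\tilde G) = B_0(kH)$-module lies at the end of its AR-component, which is exactly the desired conclusion.

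The argument is thus a one-line bootstrap, and there is no real obstacle beyond the routine inheritance checks (i)--(iii) above; the substantive content has already been packed into Corollary~\ref{cor:tubesO^p'} (which in turn relies on Lemma~\ref{lem:periodicProduct}).
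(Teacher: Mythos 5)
Your proof is correct, and it realizes exactly the bootstrap the paper has in mind when it says the result ``immediately'' follows from Corollary~\ref{cor:tubesO^p'}: the only substantive move is to observe that $\tilde G = O^{p'}(G)$ itself satisfies the standing hypotheses ($O_{p'}(\tilde G)=1$, non-cyclic abelian Sylow $p$-subgroups, hence wild principal block) and $O^{p'}(\tilde G)=\tilde G$, so that in the presence of a periodic simple module Corollary~\ref{cor:tubesO^p'} applied to $\tilde G$ collapses it to a non-abelian simple group covered by the hypothesis. Your routine inheritance checks are the right ones and fill in precisely what the paper leaves implicit.
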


\subsection{Simple modules in $\IZ A_{\infty}$-components}

\begin{lem}\label{lem:nonperiodicProduct}
Let $H=\tilde{H}_1\times \cdots \times \tilde{H}_m$ $(m\geq 1)$ be a finite group with abelian Sylow $p$-subgroups 
such that $p\mid |\tilde{H}_i|$ for each $1\leq i\leq m$. If $B_0(kH)$ is a wild block containing a non-periodic 
simple module $S$ not lying at the end of its AR-component, then $m=1$.
\end{lem}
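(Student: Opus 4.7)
The plan is to argue by contradiction: assume $m\geq 2$ and use Kawata's Criterion (Theorem~\ref{thm:kawCartan}) to produce a uniserial PIM of $B_0(kH)$ of Loewy length at least $3$, then show that this is incompatible with the tensor-product decomposition $B_0(kH)\cong B_0(k\tilde H_1)\otimes_k\cdots\otimes_k B_0(k\tilde H_m)$ as soon as $m\geq 2$.

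Since $S$ is non-periodic and does not lie at the end of its AR-component, Erdmann's theorem forces this component to have shape $\IZ A_\infty$, and $S$ lies on the $n$-th row from the end for some minimal integer $n\geq 2$. Theorem~\ref{thm:kawCartan} then supplies a simple module $S_2\in\IBr(B_0(kH))$ whose projective cover $P(S_2)$ is uniserial of Loewy length $n+1\geq 3$. Using the above tensor-product decomposition of $B_0(kH)$, I may write $S_2\cong S_{2,1}\otimes_k\cdots\otimes_k S_{2,m}$ with $S_{2,j}\in\IBr(B_0(k\tilde H_j))$, and correspondingly $P(S_2)\cong P(S_{2,1})\otimes_k\cdots\otimes_k P(S_{2,m})$.

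Next I claim that no $P(S_{2,j})$ is simple. Indeed, if $P(S_{2,j})=S_{2,j}$ were simple projective then, since $k\tilde H_j$ is selfinjective, $S_{2,j}$ would also be injective; hence $\Ext^1_{k\tilde H_j}(S_{2,j},-)=0=\Ext^1_{k\tilde H_j}(-,S_{2,j})$, so that $S_{2,j}$ would constitute a block on its own. This would give $B_0(k\tilde H_j)=\{S_{2,j}\}$, but $B_0(k\tilde H_j)$ also contains the trivial module $k_{\tilde H_j}$, which is non-projective since $p\mid|\tilde H_j|$, a contradiction. Hence each $P(S_{2,j})$ has Loewy length at least~$2$.

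Finally, using iteratively the identity $J(A_1\otimes_k A_2)=J(A_1)\otimes_k A_2+A_1\otimes_k J(A_2)$, valid over an algebraically closed field, one computes
$$L_2(P(S_2))\,\cong\,\bigoplus_{j=1}^{m}S_{2,1}\otimes_k\cdots\otimes_k L_2(P(S_{2,j}))\otimes_k\cdots\otimes_k S_{2,m}\,,$$
in which every summand is non-zero by the previous claim. For $m\geq 2$ this second Loewy layer is a direct sum of at least two non-zero modules, hence cannot be simple; this contradicts the uniseriality of $P(S_2)$. Therefore $m=1$. The only mildly technical point is the Loewy-layer computation for tensor products; once it is in hand, the argument runs smoothly.
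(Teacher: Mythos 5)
Your proof is correct, but it follows a genuinely different route from the paper's. The paper argues via vertex theory: since the Sylow $p$-subgroups are abelian, Kn\"orr's theorem forces every simple module in $B_0(kH)$ to have a full Sylow $p$-subgroup as vertex, and by Okuyama--Uno this propagates to the whole $\IZ A_\infty$-component; one then applies \cite[Lemma~1.4]{KMU00} to the restriction to $\tilde H_1\times\cdots\times\tilde H_{m-1}$ to conclude that one tensor factor of $S$ is projective, contradicting the vertex information. Your argument instead uses the explicit Loewy structure supplied by Theorem~\ref{thm:kawCartan}(b): the uniserial PIM $P(S_2)$ of Loewy length $n+1\geq 3$ decomposes as $P(S_{2,1})\otimes\cdots\otimes P(S_{2,m})$, and the standard formula
$$
L_n(M_1\otimes\cdots\otimes M_m)\ \cong\ \bigoplus_{a_1+\cdots+a_m=n+m-1,\ a_i\geq 1}\ L_{a_1}(M_1)\otimes\cdots\otimes L_{a_m}(M_m)
$$
(which follows from $J(A_1\otimes A_2)=J(A_1)\otimes A_2+A_1\otimes J(A_2)$) shows the second Loewy layer decomposes into $m$ nonzero direct summands once you know each $P(S_{2,j})$ is non-simple; the latter you correctly deduce from the fact that $B_0(k\tilde H_j)$ contains the non-projective trivial module. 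For $m\geq 2$ this contradicts uniseriality. Your approach avoids the vertex-theoretic machinery entirely (and in particular avoids invoking Kn\"orr's theorem, which is where the paper actually uses the abelian Sylow hypothesis); the cost is that you rely on the stronger structural conclusion of Kawata's criterion (the explicit uniserial shape of $P(S_2)$) rather than merely the existence of a simple module at the end of $\Gamma_s(\Omega(S))$. Both are valid; yours is arguably more elementary once the (classical) Loewy-layer formula for tensor products is granted.
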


This lemma and its proof below generalises parts of the proof of \cite[Theorem~5(i)]{KMU00}.

\begin{proof}
Assume that $m\geq 2$.
Then by Theorem~\ref{thm:kawCartan}(b), there exists a simple $B_0(kH)$-module $T$ lying at the end of $\Gamma_s(\Omega(S))$.
By Kn\"orr's Theorem \cite[3.7 Corollary]{Knorr79}, we know that the vertices of the simple modules in 
$B_0(kH)$ are the Sylow $p$-subgroups of $H$, because they are abelian. Now by assumption $
\Gamma_s(S)\cong  \IZ A_{\infty}$, which implies that all the modules in $\Gamma_s(S)$ and $
\Gamma_s(\Omega(S))$ have the Sylow $p$-subgroups as their vertices by \cite[Theorem]{OkUn94}. 
So all the modules in $\Gamma_s(S)$ and $\Gamma_s(\Omega(S))$ are not projective relatively to the 
subgroup $N:=\tilde{H}_1\times \cdots \times \tilde{H}_{m-1}$  as it does not contain a Sylow $p$-subgroup of 
$H$. Thus, as $p\neq 2$, all the simple direct summands of $S{\downarrow}_N$ belong to blocks of  defect 
zero by \cite[Lemma~1.4]{KMU00}.
But 
$$B_0(kH)=B_0(kN)\otimes_k B_0(k \tilde{H}_{m})$$
 and there exist a simple $B_0(kN)$-module $S_0$ and a simple $B_0(k \tilde{H}_m)$-module $S_m$ such 
that 
$$S=\Inf_{N\times  \tilde{H}_m/1\times  \tilde{H}_m}^H(S_0)\otimes_k \Inf_{N\times  \tilde{H}_m/N\times 1}^H(S_m)\,.$$
By the above, $S_0$ is a projective $kN$-module (indeed $S{\downarrow}_N=(\dim_k S_m)S_0$), hence 
$$\Inf_{N\times  \tilde{H}_m/1\times  \tilde{H}_m}^H(S_0)$$
 is projective relatively to $ \tilde{H}_m$ and therefore so is $S$ seen as the above tensor product. This 
contradicts the fact that the vertices of $S$ are the Sylow $p$-subgroups of $H$.
Hence we conclude that $S$ must lie at the end of $\Gamma_s(S)$.
\end{proof}

\begin{prop}\label{prop:simplesZAinfinity}
Let  $G$ be a finite group  with $O_{p'}(G)=1$ and non-cyclic abelian Sylow $p$-subgroups. Assume 
moreover that one of Conditions (i), (ii), or (iii) of Theorem~\ref{thm:redsimples} is satisfied. Then every non-periodic  simple $B_0(kO^{p'}(G))$-module lies at the end of its AR-component.
\end{prop}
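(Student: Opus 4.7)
The plan is to dispose of the three cases of the hypothesis separately, each by a short appeal to results already in place. Set $H := O^{p'}(G)$, so that $H = Q \times H_1 \times \cdots \times H_m$ as in the preamble of the section. Since $[G:H]$ is prime to $p$, a Sylow $p$-subgroup of $H$ coincides with one of $G$; it is therefore non-cyclic abelian, and consequently $B_0(kH)$ is a wild block of $kH$. This observation makes the wildness hypothesis of the results invoked below automatic.

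In case~(i), $Q \neq 1$ is a non-trivial normal $p$-subgroup of the direct product $H$, so Theorem~\ref{thm:end}(a), applied to the wild block $B_0(kH)$, yields that \emph{every} simple $B_0(kH)$-module (periodic or not) lies at the end of its AR-component, which is even stronger than required. In case~(ii), we have $H = H_1 \times \cdots \times H_m$ with $m \geq 2$ non-abelian simple factors of $p$-divisible order; Lemma~\ref{lem:nonperiodicProduct} then applies, and the existence of a non-periodic simple $B_0(kH)$-module not lying at the end of its AR-component would force $m = 1$, contradicting $m \geq 2$.

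Finally, case~(iii) is immediate: $H = H_1$ and the desired conclusion on non-periodic (indeed, all) simple $B_0(kH)$-modules is exactly the stated hypothesis on $H_1$. There is no real obstacle to the argument; once Lemma~\ref{lem:nonperiodicProduct} is in hand, the proof reduces to this brief case analysis together with the verification that $B_0(kH)$ is wild.
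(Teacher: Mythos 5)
Your proof is correct and follows exactly the same case analysis as the paper: Theorem~\ref{thm:end}(a) for case~(i), Lemma~\ref{lem:nonperiodicProduct} for case~(ii), and the hypothesis itself for case~(iii). The extra remark that a Sylow $p$-subgroup of $H=O^{p'}(G)$ coincides with one of $G$, so $B_0(kH)$ is wild, is a sensible explicit check that the paper leaves implicit.
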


\begin{proof}
We have $O^{p'}(G)=Q$ or $O^{p'}(G)=Q \times H_1\times \cdots \times H_m$, where $Q$ is an abelian $p$-group and  $H_i$ is a non-abelian finite simple group with non-trivial Sylow $p$-subgroups for each $1\leq i\leq m$. \par
If (i) holds, that is $Q\neq 1$, then by Theorem~\ref{thm:end}(a), all simple $B_0(kO^{p'}(G))$-modules lie at 
the end of their AR-components.
Therefore, we assume for the rest of the proof that $Q=1$.\par
Next if  (ii) holds, that is $m\geq 2$, the claim follows from Lemma~\ref{lem:nonperiodicProduct}.\par

Finally if (iii) holds, that is $O^{p'}(G)=H_1$, then $H_1$ must have a non-cyclic Sylow $p$-subgroup, 
therefore all simple $B_0(kO^{p'}(G))$-modules lie at the end of their $AR$-components by assumption.
\end{proof}

%--------------------------------------------------------------------------------------------
%------------------------------------------ Section --------------------------------------
%--------------------------------------------------------------------------------------------
\vspace{4mm}
\section{Reduction to  $O^{p'}(G)$} \label{sec:}

We continue  assuming  that $G$ is a finite group with non-cyclic abelian Sylow $p$-subgroups such that ${O_{p'}(G)=1}$, unless otherwise stated.
We now prove that an answer to Question~\ref{Ques:Q1} is  detected by restriction to the normal subgroup  
$O^{p'}(G)$ of $G$.\\

We set   $H :=O^{p'}(G)$, let $P\in \Syl_p(H)$ be a Sylow $p$-subgroup, and set $N:=HC_G(P)$. Moreover 
we set $B:=B_0(kG)$, $b:=B_0(kN)$ and $\tilde{b}:=B_0(kH)$.  Then $N$ is Dade's Group $G[\tilde{b}]$ and 
$N\trianglelefteq G$, see Lemma~\ref{G[b]}.\\

First of all Question~\ref{Ques:Q1} has an affirmative answer for the group  $N$ if and only if it has an 
affirmative answer for the group  $H$.

\begin{lem}\label{lem:alpdadered}
With the above notation, every simple $b$-module lies at the end of its AR-component if and only if every 
simple $\tilde{b}$-module lies at the end of its AR-component.
\end{lem}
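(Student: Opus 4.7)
The plan is to reduce the iff, by induction on $[N:H]$, to the case where $[N:H]$ is prime, and then to invoke Proposition~\ref{hyp_b} together with a converse of it. The reduction hinges on showing that $N/H$ is a solvable $p'$-group.

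Clearly $N/H\cong C_G(P)/C_H(P)$ is a $p'$-group as a subquotient of $G/O^{p'}(G)=G/H$. Using the Fong--Harris decomposition $H=Q\times H_1\times\cdots\times H_m$ (Lemma~\ref{lem:FH}) together with the standing hypothesis $O_{p'}(G)=1$, I argue that $C_G(H)=Z(H)=Q$: since $P$ is a Sylow $p$-subgroup of $G$ (as $G/H$ has $p'$-order), any $p$-element of $C_G(H)$ lies in $P\cap C_G(H)\subseteq H\cap C_G(H)=Z(H)=Q$, so $Q$ is a central Sylow $p$-subgroup of $C_G(H)$. By Burnside's normal $p$-complement theorem $C_G(H)$ has a normal $p$-complement, which lies in $O_{p'}(C_G(H))\subseteq O_{p'}(G)=1$; hence $C_G(H)=Q$. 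Consequently $N/H$ embeds into $\Out(H)$, and since elements of $N$ centralise $P=Q\times P_1\times\cdots\times P_m$ they fix $Q$ pointwise and stabilise each $H_i$. The image of $N/H$ therefore lies in $\prod_{i}\Out(H_i)$, which is solvable by Schreier's Conjecture (a consequence of CFSG); hence $N/H$ is solvable.

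Given that $N/H$ is solvable of $p'$-order, induction on $[N:H]$ reduces the lemma to the case $[N:H]=q$ prime with $q\neq p$. In this case, the direction ``every simple $\tilde b$-module lies at the end $\Rightarrow$ every simple $b$-module lies at the end'' follows directly from Proposition~\ref{hyp_b} applied with $(G,N)$ replaced by $(N,H)$. For the reverse direction I plan to prove an analog of Proposition~\ref{hyp_b} going the other way: assuming every simple $b$-module lies at the end, I suppose for contradiction that some $T\in\IBr(\tilde b)$ lies on the $n$-th row of $\Gamma_s(T)$ with $n\geq 2$ minimal. Then Theorem~\ref{thm:kawCartan} provides simples $T_2,\ldots,T_n\in\IBr(\tilde b)$ at the end, with uniserial PIMs of length $n+1$. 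Via Clifford theory, each $T_i$ either extends to a simple $b$-module (if $T_i$ is $N$-stable) or induces to one; using Lemma~\ref{projCover} and Lemma~\ref{radical} to transfer the uniserial Loewy structure of each $P(T_i)$ to that of the associated simple $b$-modules, one assembles a Kawata-type configuration for $b$ whose ``top'' simple lies off the end of its AR-component, contradicting the hypothesis.

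The main obstacle is this last step: a clean proof of the converse of Proposition~\ref{hyp_b}. It should mirror the three-case Claim analysis of the proof of Proposition~\ref{hyp_b}, with induction replacing restriction throughout, and requires careful bookkeeping of which $T_i$ are $N$-stable so that the uniserial structures and Cartan entries from Theorem~\ref{thm:kawCartan} lift consistently from $\tilde b$ to $b$.
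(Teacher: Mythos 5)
Your approach is genuinely different from the paper's, but it contains two fatal gaps. The paper's proof is a one-liner: since $N = G[\tilde{b}]$ (Lemma~\ref{G[b]}(b)), the Alperin-Dade Theorem gives that $b$ and $\tilde{b}$ are isomorphic as $k$-algebras, hence Morita equivalent, and the property of a simple module lying at the end of its AR-component is a Morita invariant. Your plan instead reduces to prime index and applies Proposition~\ref{hyp_b} (and a hypothetical converse) to the pair $(N,H)$. But Hypothesis~\ref{hyp1}(b) requires $1_B = 1_b$, and the analogous condition $1_b = 1_{\tilde{b}}$ for the pair $(N,H)$ is \emph{false} in general: since $N = N[\tilde{b}]$, the Alperin-Dade theorem tells us that every block of $kN$ covering $\tilde{b}$ is isomorphic to $\tilde{b}$ as a $k$-algebra, so $1_{\tilde{b}}$ typically decomposes into several orthogonal block idempotents of $kN$ (concretely, take $N = H \times C_q$ with $q \neq p$ prime, where $1_{\tilde{b}}$ splits into $q$ block idempotents of $kN$), and $1_b$ is merely one of them. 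This is precisely why the paper invokes Dade's group $G[\tilde{b}]$ and the Alperin-Dade isomorphism for the passage from $H$ to $N$, reserving Theorem~\ref{thm:solvablequotient} (which genuinely relies on the idempotent condition) for the passage from $N$ to $G$, where Lemma~\ref{G[b]}(c) does supply $1_B = 1_b$.

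The second gap is the converse of Proposition~\ref{hyp_b}, which you need for the ``only if'' direction: it is not proven anywhere in the paper, and your proposal only sketches a plan and explicitly flags it as ``the main obstacle''. Such a converse would require a full new argument mirroring the three-claim analysis of Proposition~\ref{hyp_b}, and it is not clear it would go through. As written, neither direction of the lemma is established by your approach; the whole point of the Alperin-Dade route is that it sidesteps these difficulties entirely.
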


\begin{proof}
By the Alperin-Dade Theorem \cite[Theorem]{Dade77}, the blocks $b$ and $\tilde{b}$ are isomorphic as $k$-algebras, hence  Morita equivalent. But for a simple module, lying at the end of its AR-component is a 
property preserved by Morita equivalence.
\end{proof}

\begin{prop}\label{prop:redO^p'}
If every simple $\tilde{b}$-module lies at the end of its AR-component, then every simple $B$-module lies at 
the end of its AR-component.
\end{prop}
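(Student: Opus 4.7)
The plan is to derive Proposition~\ref{prop:redO^p'} by applying Theorem~A (Theorem~\ref{MainTheorem}) to the pair $(G,N)$, using Alperin--Dade to transfer the hypothesis from $\tilde b$ to $b$.

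First, the block-theoretic data are assembled from Lemmas~\ref{G[b]} and~\ref{lem:alpdadered}. By Lemma~\ref{G[b]}, $N\trianglelefteq G$, the index $|G/N|$ is coprime to $p$, and $1_B=1_b$; moreover, since $P\leq H\leq N$ is a Sylow $p$-subgroup of $G$, it is a defect group of both $B$ and $b$, and both blocks are wild as $P$ is non-cyclic abelian. By Lemma~\ref{lem:alpdadered}, $b\cong\tilde b$ as $k$-algebras, so $b$ and $\tilde b$ are Morita equivalent, and the rim-property for simple modules is invariant under Morita equivalence. Hence the hypothesis that every simple $\tilde b$-module lies at the end of its AR-component transfers to the analogous statement for every simple $b$-module.

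The remaining input for Theorem~A is the solvability of $G/N$, which is the main delicate point. Since $C_G(P)\leq N$, the quotient $G/N$ embeds as a $p'$-subgroup of $\mathrm{Aut}(P)$. Using the Fong--Harris decomposition $H=Q\times H_1\times\cdots\times H_m$ from Lemma~\ref{lem:FH}, we obtain $P=Q\times P_1\times\cdots\times P_m$ with $P_i\in\mathrm{Syl}_p(H_i)$; the action of $G/N$ on $P$ respects this factorisation up to permutation of the $P_i$'s, factoring through the action on $Q$, through $\mathrm{Aut}(H_i)$ on each factor, and through a permutation component in $S_m$. Schreier's conjecture (CFSG) guarantees that $\mathrm{Out}(H_i)$ is solvable for each $i$, and together with the abelian $\mathrm{Aut}(Q)$ and the $p'$-constraint on the permutation component, one assembles the solvability of $G/N$.

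With all hypotheses verified, Theorem~A delivers the conclusion that every simple $B$-module lies at the end of its AR-component in $\Gamma_s(kG)$. The main obstacle I anticipate is the solvability verification: the induced permutation subgroup of $S_m$ is only a priori a $p'$-group and need not be solvable for $p$ large and $m\geq 5$. Completing that step in full generality may require a finer analysis of the action within the Fong--Harris decomposition, or an intermediate normal subgroup reduction $N\leq M\leq G$ with $G/M$ solvable; failing that, one would need to generalise Proposition~\ref{hyp_b} to arbitrary (non-solvable) $p'$-quotients, which would be substantially more delicate.
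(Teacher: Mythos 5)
The gap you flag at the end is real, and it is fatal to the strategy of applying Theorem~\ref{MainTheorem} uniformly to the pair $(G,N)$: the quotient $G/N$ need not be solvable. For instance, take $p=7$ and $G=\left(\PSL_2(7)\right)^5\rtimes\fA_5$ with $\fA_5$ permuting the five factors. Then $O_{7'}(G)=1$, $H=O^{7'}(G)=\PSL_2(7)^5$, the Sylow $7$-subgroup $P\cong(C_7)^5$ is abelian and non-cyclic, and any element with non-trivial $\fA_5$-part moves some $P_i$ into a different direct factor and hence cannot centralise $P$, so $C_G(P)\leq H$ and $N=HC_G(P)=H$. Thus $G/N\cong\fA_5$ is a non-solvable $p'$-group, and no finer analysis of the action within the Fong--Harris decomposition can rescue the reduction via Theorem~\ref{MainTheorem} alone.

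The paper sidesteps this entirely by splitting on the shape of the AR-component of the given simple $B$-module $S$. If $\Gamma_s(S)\cong\IZ A_\infty$, Theorem~\ref{MainTheorem} is not needed at all: by \cite[Lemma~1.5]{KMU00}, for a normal subgroup of $p'$-index a non-periodic simple module lies at the end of its $\IZ A_\infty$-component if and only if any simple direct summand of its restriction does, so the hypothesis on $\tilde b$ transfers directly along $G\geq H$. If instead $\Gamma_s(S)$ is a tube, then $S$ is periodic and Corollary~\ref{cor:tubesO^p'} forces $H=O^{p'}(G)$ to be a \emph{single} non-abelian simple group; only then does one invoke Schreier's conjecture (so that $G/H$ is a solvable $p'$-subgroup of $\Out(H)$), followed by Lemma~\ref{lem:alpdadered}, Lemma~\ref{G[b]}(c) and Theorem~\ref{MainTheorem}. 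In other words, precisely in the situation where your solvability argument would break down (many simple direct factors, large $m$), the periodicity obstruction vanishes and a different tool applies. Your block-theoretic setup (Lemma~\ref{G[b]}, Alperin--Dade, Morita invariance of the rim-property) is correct and is indeed what the paper uses in the periodic case; what is missing is the case distinction and the appeal to \cite[Lemma~1.5]{KMU00} for the $\IZ A_\infty$ components.
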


\begin{proof}
Let $S$ be a simple $B$-module and let $T$ be a simple direct summand of $S{\downarrow}_{H}$. Then $T$ 
is periodic if and only if $S$ is. Therefore $\Gamma_s(S)~\cong~ \IZ A_{\infty}$ if and only if $
\Gamma_s(T)~\cong~ \IZ A_{\infty}$, and  $\Gamma_s(S)$ is an infinite tube  with tree class $A_{\infty}$ if 
and only if $\Gamma_s(T)$ is an infinite tube  with tree class $A_{\infty}$.\par
In case $\Gamma_s(S)~\cong~ \IZ A_{\infty}$, then  $S$ lies at the end of $\Gamma_s(S)$ if and only if $T$ 
lies at the end of $\Gamma_s(T)$ by \cite[Lemma 1.5]{KMU00}.\par
In case $\Gamma_s(S)$  is an infinite tube  with tree class $A_{\infty}$, then by Corollary~\ref{cor:tubesO^p'}, 
$H$ is a non-abelian finite simple group with non-cyclic abelian Sylow $p$-subgroups. Now, by Schreier's 
conjecture (now proven by the Classification of Finite Simple Groups, see %\cite[p.48 Chap.I \S6]{Suz82} 
\cite[Definition 2.1]{GLS2} 
 \cite[Theorem 7.1.1]{GLS3}), we know that $G/H$ is a solvable $p'$-subgroup of $\Out(H)$. 
Now by Lemma~\ref{lem:alpdadered}, we may assume $H=N$ and  by Lemma~\ref{G[b]}(c) we have 
$1_B=1_b$. Therefore Theorem~\ref{thm:solvablequotient} implies that  $S$ lies at the end of $\Gamma_s(S)$ because every simple $b$-module lies at the end of its AR-component.
\end{proof}

As a corollary, we obtain Theorem~\ref{thm:redsimples} of the Introduction.

\begin{proof}[Proof of Theorem~\ref{thm:redsimples}]
Let $G$ be a finite group with non-cyclic abelian Sylow $p$-sub\-groups. As $B_0(kG)$ and $B_0(kG/O_{p'}(G))$ are Morita equivalent, we may assume that $O_{p'}(G)~=~1$. Therefore, 
by Proposition~\ref{prop:redO^p'}, every simple $B_0(kG)$-module lies at the end of its AR-component  if 
every simple $kB_0(O^{p'})(G)$-module lies at the end of its AR-component.
Now if $B_0(G)$ contains a periodic simple module, then by Corollary~\ref{cor:tubesO^p'}  we must have that   
$O^{p'}(G)=H_1$  is a non-abelian finite simple group with non-cyclic abelian Sylow $p$-subgroups, then the 
claim holds by Corollary~\ref{cor:periodic}. Therefore we may assume that $B_0(kG)$, and hence 
$B_0(kO^{p'}(G))$, contains no periodic simple module. In this case, if one of Conditions (i),(ii), or (iii) holds, 
then the claim follows from Proposition~\ref{prop:simplesZAinfinity}.
\end{proof}

Now Corollary~\ref{thm:corredsimples} is a direct consequence of Theorem~\ref{thm:redsimples}.

%--------------------------------------------------------------------------------------------------------
%------------------------------------------ Section --------------------------------------
%--------------------------------------------------------------------------------------------------------
\vspace{4mm}
\section{Principal $3$-blocks}\label{sec:p=3}

We now fix $p:=3$, and continue assuming that $G$ is a finite group with non-cyclic Sylow $3$-subgroups, 
so that $B_0(kG)$ is wild. We may also assume that $O_{3'}(G)=1$.\\

We start by investigating principal $3$-blocks of non-abelian finite simple groups with abelian defect group. To 
this aim, we recall that the list of non-abelian finite simple groups  with abelian Sylow $3$-subgroups is known 
by the classification of finite simple groups and was determined by Paul Fong (in an unpublished manuscript).

\begin{prop}[{}{\cite[Proposition 4.3]{KY10}}]\label{prop:simpleabdef}
If $G$ is a non-abelian finite simple group with non-cyclic abelian Sylow $3$-subgroup, then $G$ is one of:
\begin{enumerate}
  \item[\rm(i)] $\fA_7$, $\fA_8$, $M_{11}$, $M_{22}$, $M_{23}$, $HS$, $O'N$;
  \item[\rm(ii)]  $\PSL_3(q)$ for a prime power $q$ such that $3||(q-1)$;
  \item[\rm(iii)] $\PSU_3(q^2)$ for a prime power $q$ such that $3||(q+1)$;
  \item[\rm(iv)]  $\PSp_4(q)$ for a prime power $q$ such that $3|(q-1)$; 
  \item[\rm(v)]  $\PSp_4(q)$ for a prime power $q$ such that $q>2$ and $3|(q+1)$; 
  \item[\rm(vi)]  $\PSL_4(q)$ for a prime power $q$ such that $q>2$ and $3|(q+1)$; 
  \item[\rm(vii)]  $\PSU_4(q^2)$ for a prime power $q$ such that $3|(q-1)$; 
  \item[\rm(viii)]  $\PSL_5(q)$ for a prime power $q$ such that  $3|(q+1)$; 
  \item[\rm(ix)]  $\PSU_5(q^2)$ for a prime power $q$ such that $3|(q-1)$; or
  \item[\rm(x)]  $\PSL_2(3^n)$ for an integer $n\geq 2$.
\end{enumerate}
\end{prop}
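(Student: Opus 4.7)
The plan is to invoke the Classification of Finite Simple Groups and proceed family by family. For every non-abelian finite simple group $H$ one must decide whether $9\mid |H|$, whether a Sylow $3$-subgroup $P\in\Syl_3(H)$ is abelian, and whether $P$ is cyclic. For the alternating group $\fA_n$, a Sylow $3$-subgroup is the well-known iterated wreath product of cyclic groups of order three; it is abelian iff no base-block of size $\geq 9$ occurs (so $n<9$), and non-cyclic iff at least two base-level cycles appear (so $n\geq 6$), which leaves $n\in\{6,7,8\}$. Since $\fA_6\cong\PSL_2(9)$ is already subsumed by case~(x), this contributes $\fA_7$ and $\fA_8$ to~(i). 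For the twenty-six sporadic groups I would simply read off the Sylow $3$-subgroup from the \textsc{Atlas}, keeping those that are elementary abelian of rank $\geq 2$; this yields $M_{11}, M_{22}, M_{23}, HS, O'N$ and no others, producing the rest of~(i).

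For groups of Lie type $\bG(q)$ in defining characteristic $p=3$, a Sylow $3$-subgroup is the group of $\IF_q$-points of a maximal unipotent subgroup. This is non-abelian as soon as the root system contains two positive roots whose sum is again a root, hence for every Lie type other than $A_1$. Rank one gives $\PSL_2(3^n)$, whose Sylow $3$-subgroup is the elementary abelian $(C_3)^n$; this is non-cyclic exactly when $n\geq 2$, producing case~(x). The group $\PSL_2(3)\cong\fA_4$ is excluded because it is not non-abelian simple.

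For groups of Lie type in cross characteristic the main work is required. Let $e$ be the multiplicative order of $q$ modulo $3$, so $e\in\{1,2\}$ according as $3\mid q-1$ or $3\mid q+1$. Using the standard description of Sylow $\ell$-subgroups via a Sylow $\Ph{e}$-torus $\bT$, a Sylow $3$-subgroup $P$ of $\bG(q)$ sits inside $N_{\bG(q)}(\bT)$ and fits into an extension $1\to T\to P\to P/T\to 1$ with $T=\bT^F$ abelian and $P/T$ embedded into the relative Weyl group $W_\bT:=N_{\bG}(\bT)/C_{\bG}(\bT)$. Hence $P$ is abelian iff $3\nmid|W_\bT|$, and it is moreover non-cyclic iff the $3$-torsion of $T$ has rank at least two. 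I would run through the classical families $A_n,\tw{2}A_n,B_n,C_n,D_n,\tw{2}D_n$ and read off the numerical constraints on rank and on the $e$-congruence of $q$ giving non-cyclic abelian $P$; these produce cases (ii)--(ix). For the exceptional types $G_2, F_4, \tw{2}F_4, E_6, \tw{2}E_6, E_7, E_8, \tw{3}D_4$, the same analysis shows that whenever $9\mid|\bG(q)|$ in cross characteristic either the $3$-rank equals one or $3$ divides $|W_\bT|$, so none of them contribute.

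The hard part is the bookkeeping in the previous paragraph: one has to track simultaneously the $3$-part of $|\bG(q)|$ coming from the $\Ph{e}$-factors of the generic order, the $3$-part of $|W_\bT|$, and the effect of the isogeny type on the resulting Sylow structure (e.g.\ $\SL_n$ versus $\PSL_n$, where a central factor of $3$ can move between $T$ and the derived subgroup). This is precisely what distinguishes the strict divisibility ``$3\,\|\,(q-1)$'' appearing in cases~(ii)--(iii) from the non-strict ``$3\mid(q\mp1)$'' in cases~(iv)--(ix): in the former the $3$-part of $|T|$ is small enough that $P$ is already contained in $T$, whereas in the latter higher powers of $3$ force contributions from the relative Weyl group that alter the Sylow structure, and one must carefully confirm that abelianness still survives.
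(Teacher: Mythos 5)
The paper does not give a proof of this proposition: it is quoted verbatim from \cite[Proposition 4.3]{KY10}, and the preamble to Section~6 attributes the underlying classification to an unpublished manuscript of Paul Fong. There is therefore no in-paper argument to compare yours against, and I can only assess your sketch on its own terms.

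Your CFSG-based framework is the right one, and your treatment of the alternating, sporadic, and defining-characteristic cases is correct. However, there is a genuine gap in the cross-characteristic Lie-type step. The criterion you deduce, ``$P$ is abelian iff $3\nmid|W_\bT|$,'' is false, and it fails precisely on cases that belong in the list. Take $G=\PSL_3(q)$ with $3\,\|\,(q-1)$ (case~(ii)): the $\Phi_1$-torus is maximally split and $W_\bT\cong W(A_2)\cong S_3$, so $3\mid|W_\bT|$, yet $|G|_3=9$ and a Sylow $3$-subgroup is $(C_3)^2$, abelian and non-cyclic. What happens is that in passing from $\SL_3(q)$ (whose Sylow $3$-subgroup really is the extraspecial group $3^{1+2}$) to the simple quotient $\PSL_3(q)$, the $3$-torsion of $T$ collapses from $(C_3)^2$ to a single $C_3$, and the $3$-cycle generating the $3$-part of $W_\bT$ then acts trivially on it (it fixes the image of $\mathrm{diag}(\omega,1,\omega^{-1})$ modulo scalars). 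The correct criterion is that $P$ is abelian iff the $3$-part of $W_\bT$ acts trivially on the $3$-torsion of $T$ in the relevant isogeny type; a pure divisibility test on $|W_\bT|$ is a non-sequitur. This also undercuts your explanation of the strict-versus-non-strict divisibility in (ii)--(iii): in those cases $P$ is never contained in $T$, even when $3\,\|\,(q\mp1)$; the point is the triviality of the Weyl action, not smallness of $T$. You do flag the isogeny dependence at the end, but the flaw is upstream in the ``hence,'' and as stated your criterion would wrongly exclude two of the ten families.
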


\noindent As a consequence we obtain:

\begin{prop}\label{prop:B0nonabsimple}
If $G$ is a non-abelian finite simple group with  non-cyclic abelian Sylow $3$-subgroups, then every simple 
$B_0(kG)$-modules lies at the end of its component in $\Gamma_s(B_0(kG))$.
\end{prop}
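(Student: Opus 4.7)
The plan is to proceed case-by-case through the ten families of non-abelian finite simple groups listed in Proposition~\ref{prop:simpleabdef}. The families split naturally into a defining-characteristic case and nine non-defining-characteristic cases, and for each case the goal is to reduce the statement to a situation in which Theorem~\ref{thm:end} applies, exploiting the Morita-invariance of the property ``lying at the end of the AR-component''.

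For family (x), namely $G = \PSL_2(3^n)$ with $n \geq 2$, the group is a finite simple group of Lie type in the defining characteristic $p=3$ with elementary abelian Sylow $3$-subgroup of order $3^n$, so $B_0(kG)$ has full defect. Hence Theorem~\ref{thm:end}(c) applies directly and yields the claim. For the remaining nine families the characteristic $p=3$ is non-defining, and the Sylow $3$-subgroup $P$ is either $C_3 \times C_3$ (the majority of cases) or elementary abelian of order $27$ (the case of $O'N$ and certain of the higher-rank classical groups). The key observation in these cases is that, since $P$ is a normal abelian Sylow $3$-subgroup of $N_G(P)$, the quotient $N_G(P)/P$ is a $3'$-group; consequently $N_G(P)$ is $3$-solvable, so Theorem~\ref{thm:end}(b) applies to the Brauer correspondent $B_0(k N_G(P))$. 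The strategy is therefore to invoke, in each remaining family, an explicit Morita equivalence between $B_0(kG)$ and $B_0(kN_G(P))$ available in the literature on the verification of Puig's and Brou\'e's abelian defect group conjectures for small abelian defect groups at the prime~$3$ (by Puig, Koshitani--Kunugi, Koshitani--Kunugi--Waki, Kessar--Linckelmann, Koshitani--M\"uller, Koshitani--Noeske, and others). Morita invariance of being simple and at the end of its AR-component, combined with Theorem~\ref{thm:end}(b), then transfers the conclusion to $B_0(kG)$.

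As a fallback for any family in which a Morita equivalence with the Brauer correspondent is not directly quoted, the plan is to apply Kawata's Criterion (Theorem~\ref{thm:kawCartan}): using the known decomposition and Cartan matrices of $B_0(kG)$ in each case, one verifies that the Cartan matrix does not carry the characteristic $2\times 2$-block substructure of Theorem~\ref{thm:kawCartan}(b), which rules out the existence of any simple $B_0(kG)$-module lying on the $n$-th row of its AR-component for $n\geq 2$. For the sporadic and alternating groups in family (i) and the small classical-group families with $|P|=9$, such computations are short and well-documented; for the higher-rank families, one uses that the Cartan matrix of the principal block is controlled by the inertial quotient $N_G(P)/C_G(P) \leq \Aut(P)$, which for $|P|\in\{9,27\}$ takes only finitely many shapes.

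The main obstacle is bookkeeping rather than conceptual: essentially no new technique is required, but one must traverse the entire list of Proposition~\ref{prop:simpleabdef}, match each family to the correct Morita-equivalence reference (or the correct Cartan-matrix computation), and confirm in every instance that the resulting local block is one in which Theorem~\ref{thm:end}(b) or Kawata's Criterion handles the question. Locating a uniform reference covering $O'N$ and the higher-rank unitary/linear cases simultaneously is the most delicate step, since these are the families for which the principal-block Morita class has been established individually rather than in a single sweep.
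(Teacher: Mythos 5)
Your strategy is the same toolkit as the paper's proof — case-by-case through Proposition~\ref{prop:simpleabdef}, Theorem~\ref{thm:end}(c) for $\PSL_2(3^n)$, Morita transfer to the Brauer correspondent where available, and Kawata's Cartan-matrix criterion otherwise — but with the priorities reversed, and that reversal exposes a gap you should be aware of. What the literature provides for most of the elementary-abelian-defect-$9$ families (via Brou\'e's conjecture) is a \emph{derived} or splendid Rickard equivalence, not a Morita equivalence; derived equivalences do \emph{not} preserve the shape of the stable AR-quiver, so they cannot be used to transfer the property ``lies at the end of its component''. Genuine Puig (hence Morita) equivalences with $B_0(kN_G(P))$ are cited by the paper only for families (iii), (iv), (vii), (ix), via \cite[Lemma 3.7]{KY10}, and it is precisely for those that the Theorem~\ref{thm:end}(a)/(b) argument works. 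For families (i), (ii), (v), (vi), (viii) — i.e.\ the majority — the paper falls back on the Cartan-matrix criterion, which is what you describe as your secondary route; so your ``fallback'' is in fact the main engine. Two further details you would need to fill in: for families (vi) and (viii) the Cartan-matrix computation is done at the level of $\GL_n(q)$, and descending to $\SL_n(q)=\PSL_n(q)$ requires checking the simple modules are non-periodic (so that \cite[Lemma 1.5]{KMU00} applies on a $\IZ A_\infty$-component) — an extra verification via dimensions and \cite{Car79} that your sketch omits; and the assertion that the Cartan matrix of $B_0(kG)$ is ``controlled by the inertial quotient'' is too strong as stated — the inertial quotient alone does not determine the Cartan matrix, one genuinely needs the decomposition numbers (Kunugi, White, Okuyama--Waki, James). (Minor: $O'N$ has Sylow $3$-subgroup of order $3^4$, not $27$.) With these corrections your plan converges to the paper's argument.
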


\begin{proof}
Let $P\in\Syl_3(G)$, and set $N:=N_G(P)$ and  $B_0:=B_0(kG)$. We go through the list of groups in 
Proposition~\ref{prop:simpleabdef}.\par
In case (i), in all cases all simple $B_0$-modules lie at the end of their component in $\Gamma_s(B_0)$ by 
Theorem~\ref{thm:kawCartan}(b): indeed if $G$ is one of $\fA_8$, $M_{22}$ or $O'N$, then one checks from 
GAP \cite{GAP4} that the Cartan matrix of $B_0$ has no diagonal entry equal to $2$. If $G$ is one of $
\fA_7$, $M_{11}$, $M_{23}$, or $HS$, then one checks from GAP \cite{GAP4} that the Cartan matrix of 
$B_0$ does not have the shape of 
Theorem~\ref{thm:kawCartan}(b) either. \par
In case (ii), then the Cartan matrix of $B_0$ is computed in \cite[Table~2]{Kun00} and does not satisfy 
Theorem~\ref{thm:kawCartan}(b).\par
Next if $G$ is one of the groups listed in Proposition \ref{prop:simpleabdef}(iii),(iv),(vii), or (ix), then it is 
proven in \cite[Lemma 3.7]{KY10} that $B_0$ is Puig equivalent to $B_0(kN)$. But $N$ has a non-trivial 
normal Sylow $3$-subgroup, therefore all simple $B_0(kN)$-modules lie at the end of their components in $\Gamma_s(B_0(kN))$ by Theorem~\ref{thm:end}(a), and therefore so do the simple $B_0$-modules via the 
latter Puig (Morita) equivalence. \par
In case (v), the decomposition numbers of $B_0$  were computed by White and  Okuyama-Waki. If $q$ is 
even then we read from \cite[Table~II]{Whi95} that each column of the decomposition matrix of $B_0$ has at 
least~$3$ positive entries.  If $q$ is odd, then the decomposition matrix of $B_0$ is given in 
\cite[Theorem~4.2]{Whi90} up to two  parameters $\alpha$ and $\beta$. But \cite[Theorem~2.3]{OW98} 
proves that $\alpha\in\{1,2\}$. This is enough to see that each column of the decomposition matrix of $B_0$ 
has at least~$3$ positive entries. Therefore in both cases all the diagonal entries of the Cartan matrix of 
$B_0$ are at least $3$.  \par
In case (vi) and (viii), we proceed as follows. For $n\in \{4,5\}$ fixed,  we may regard $B_0(k\PSL_n(q))$ as 
the principal block of $\SL_n(q)$ as $3\nmid |Z(\SL_n(q))|$. 
Then we check that the Cartan matrix of  $B_0(k\GL_n(q))$ does not satisfy Theorem~\ref{thm:kawCartan}(b). To this end we use the information on the decomposition numbers of $B_0(k\GL_n(q))$ provided in 
\cite[Appendix~I]{James90}.  In both cases, it is enough to consider only the square submatrix  $\Delta_{n,0}$ 
of  the decomposition matrix of $B_0(k\GL_n(q))$ whose rows are indexed by the unipotent characters. Both 
in case $n=4$ and $n=5$, there are five modular characters in the principal block (using \cite{FS82}) and 
$$ \Delta_{4,0}=\left[\begin{array}{ccccccc}  (4)&   & 1  &   &   &  &  \\ (31) &   & 1  & 1  &   &  & \\  (2^2)&   &   
& 1  & 1  &  & \\  (21^2)&   &   1&   1& 1  & 1 & \\  (1^4)&   &  1 &   &   & 1& 1  \end{array}\right]\qquad 
\Delta_{5,0}=\left[\begin{array}{ccccccc}  (5)&   & 1  &   &   &  &   \\  (32)&   &   & 1  &   &  & \\  (31^2)&   &   1&   
1& 1  &  & \\  (2^21)&   &   &  1 &1   & 1&  \\  (1^5)&   &  1 &   &   1&  & 1 \end{array}\right]\,.$$
(See e.g. \cite[Proposition~3.1 and Proposition~4.1]{KM00}.)
It follows that the Cartan integers of $B_0(\GL_n(q))$ have lower bounds given by the entries of the following 
matrices:
$$^{T}\!\Delta_{4,0}\Delta_{4,0}=\left[\begin{array}{ccccc}  4 & 2  &  1 &   2& 1  \\ 2  & 3  & 2  & 1  & 0  \\  1&  2 
&   2&   1& 0  \\ 2 & 1  &  1 & 2  & 1   \\  1&  0 &  0 & 1  &1  \end{array}\right]
\qquad 
^{T}\!\Delta_{5,0}\Delta_{5,0}=\left[\begin{array}{ccccc} 3 &  1 & 2  & 0  & 1  \\  1 & 3  & 2  & 1  &  0 \\ 2 & 2   & 
3  & 1  &  1 \\ 0 &  1 &  1 & 1  &  0 \\ 1 & 0  & 1  & 0  & 1 \end{array}\right]
$$
Therefore the Cartan matrix of  $B_0(k\GL_n(q))$ cannot satisfy Theorem~\ref{thm:kawCartan}(b), and we 
conclude that all simple $B_0(k\GL_n(q))$-modules lie at the end of their AR-components.
Now, from the known values of the unipotent characters of $\GL_n(q)$, we easily check that the dimension of 
the simple modules in $B_0(k\GL_n(q))$ are prime to $3$, hence they cannot be periodic by \cite{Car79}, as 
$3^{(a-1)}$ must divide the dimension of any simple periodic module, 
where $a:=$ the $p$-rank of the group, but in our case $a\geq 2$.
Therefore every simple $B_0(k\SL_n(q))$-module lies at the end of its AR-component by \cite[Lemma~1.5]{KMU00}.\par
Finally, if $G=\PSL_2(3^n)$ for some integer $n\geq 2$, then the claim follows  from Theorem~\ref{thm:end}(c) as $G$ is a finite simple group of Lie type in defining characteristic.
\end{proof}

As a corollary we obtain Theorem~\ref{thm:p=3} of the Introduction.

\begin{proof}[Proof of Theorem~\ref{thm:p=3}]
The claim now follows from Corollary~\ref{thm:corredsimples} together with 
Proposition~\ref{prop:simpleabdef}.
\end{proof}

%--------------------------------------------------------------------------------------------------------
%------------------------------------------ Acknowledgments--------------------------------------
%--------------------------------------------------------------------------------------------------------

%\newpage

\vspace{1cm}

\noindent 
{\bf Acknowledgements.}
{\small
Both authors gratefully acknowledge financial support and the hospitality provided by the  Centre 
Interfacultaire Bernoulli (CIB) of the \'{E}cole Polytechnique F\'{e}d\'{e}rale de Lausanne during the writing 
period of this article.
The first author  is grateful to the hospitality of the Department of Mathematics in TU Kaiserslautern. }

%--------------------------------------------------------------------------------------------------------
%------------------------------------------ BIBLIOGRAPHY --------------------------------------
%--------------------------------------------------------------------------------------------------------
%

%\bibliographystyle{amsalpha}      %%%%DO NOT ERASE
%\bibliography{biblioARB03}         %%%%DO NOT ERASE

%--------------------------------------------------------------------------------------------------------
%------------------------------------------ END                      --------------------------------------
%--------------------------------------------------------------------------------------------------------

\end{document}